\crefname{section}{Section}{Sections}
\crefname{figure}{Figure}{Figures}
\crefname{table}{Table}{Tables}
\crefname{equation}{}{}
\crefname{theorem}{Theorem}{Theorems}
\crefname{lemma}{Lemma}{Lemmas}
\crefname{remark}{Remark}{Remarks}
\crefname{problem}{Inverse Problem}{Subproblems}
\newtheorem{theorem}{Theorem}[section]
\newtheorem{definition}{Definition}[section]
\theoremstyle{definition}
\newtheorem{example}{\noindent Example}
\newcommand{\ii}{\mathrm{i}}
\newcommand{\dd}{\mathrm{d}}
\newcommand\keywords[1]{\textbf{Keywords}: #1}
\begin{document}
	
	\title{Mathematical and numerical study of an inverse source problem for the biharmonic wave equation}
	
	\author{
		Yan Chang\thanks{School of Mathematics, Harbin Institute of Technology, Harbin, China. {\it $21B312002@stu.hit.edu.cn$}},
		Yukun Guo\thanks{School of Mathematics, Harbin Institute of Technology, Harbin, China. {\it ykguo@hit.edu.cn} (Corresponding author)},
		Tao Yin\thanks{LSEC, Institute of Computational Mathematics and Scientific/Engineering Computing, Academy of Mathematics and Systems Science, Chinese Academy of Sciences, Beijing 100190, China. {\it yintao@lsec.cc.ac.cn}},
		\ and Yue Zhao\thanks{School of Mathematics and Statistics, Central China Normal University, Wuhan, China. {\it  zhaoyueccnu@163.com}}
	}
	\date{}
	\maketitle
	
	\maketitle
	\begin{abstract}
		In this paper, we study the inverse source problem for the biharmonic wave equation. Mathematically, we characterize the radiating sources and non-radiating sources at a fixed wavenumber. We show that a general source can be decomposed into a radiating source and a non-radiating source. The radiating source can be uniquely determined by Dirichlet boundary measurements at a fixed wavenumber. Moreover, we derive a Lipschitz stability estimate for determining the radiating source. On the other hand, the non-radiating source does not produce any scattered fields outside the support of the source function. Numerically, we propose a novel source reconstruction method based on Fourier series expansion by multi-wavenumber boundary measurements. Numerical experiments are presented to verify the accuracy and efficiency of the proposed method.
	\end{abstract}
	\keywords{biharmonic wave equation, inverse source problem, stability, Fourier method.}

	\section{Introduction}
	
	This paper is concerned with the scattering problem in an infinitely thin elastic plate for which the wave propagation can be
	modeled by the following two-dimensional biharmonic wave equation
	\begin{equation}\label{eq:biharmonic}
		\Delta^2 u(x, k) - k^4  u(x, k) = S(x),
	\end{equation}
	where $k>0$ is the wavenumber and $S$ is the source function. We assume that $S\in  L_{\rm comp}^2(\mathbb R^2)$ and $\text{supp}S \subset B_R$
	where $B_R:= \{x\in \mathbb R^2: |x|< R\}$ with $R$ being a positive constant. Denote the boundary of $B_R$ by $\Gamma_R$.
	The following radiation conditions are imposed on $u$ and $\Delta u$:
	\begin{equation}\label{src}
		\lim_{r\to\infty}\sqrt{r} (\partial_r u-\mathrm{i}k u)=0,\quad \lim_{r\to\infty}\sqrt{r} (\partial_r (\Delta u)-\mathrm{i}k (\Delta u))=0
	\end{equation}
	uniformly in all directions $\hat{x} = x/|x|$ with $r = |x|$. 
	For the direct problem, since
	\begin{align*}
		(\Delta^2 - k^4)^{-1}  = \frac{1}{2k^2} \Big( (-\Delta-k^2)^{-1} -  (-\Delta+k^2)^{-1} \Big),
	\end{align*}
	the fundamental solution of the biharmonic wave equation takes the form
	\begin{align*}
		G(x, y, k) = \frac{\rm i}{8k^2} \left(H_0^{(1)}(k|x - y|) - H_0^{(1)}({\rm i}k|x - y|)\right).
	\end{align*}
	Here $H_0^{(1)}$ denotes the first kind Hankel function with order zero. Then the solution to the direct scattering problem \eqref{eq:biharmonic}--\eqref{src} is given by
	\begin{align}\label{eq:solution1}
		u(x, k) = \int_{\mathbb R^2}  G(x, y, k) S(y){\rm d}y.
	\end{align}
	In this work, we are interested in the inverse problem of identifying 
	the source function from boundary measurements.
	
	The biharmonic wave equation arises in the theory of plate bending and thin plate elasticity \cite{DL23, GGS}.  Although there has been extensive literature on the inverse scattering problems of acoustic, elastic, and electromagnetic waves, the inverse problems of the biharmonic wave equation are much less studied. Recently, due to their important applications in many scientific areas including offshore runway design, seismic cloaks, and platonic crystal \cite{FGE, LW, MMM, WUW},  the inverse problems 
	of the biharmonic wave equation have received considerable attention \cite{LW-22, LW, LYZ, LYZ1, TS}. However, compared with the well-developed inverse theory for acoustic, elastic, and electromagnetic wave equations,
	the inverse scattering problems for the biharmonic wave equation are much less studied. Motivated by the significant applications,
	we investigate the inverse source problem for the biharmonic wave equation. This work contains two main contributions.
	First, the radiating and non-radiating sources are characterized at a fixed wavenumber mathematically. We further derive a Lipschitz stability estimate for determining the radiating source.  
	Second, a direct and effective numerical method is developed to reconstruct the source function
	by multi-wavenumber boundary measurements. 
	
	For the inverse source problems at a single wavenumber, in general the uniqueness cannot be guaranteed due to the existence of non-radiating sources  \cite{blz}.
	The radiating sources and non-radiating sources were mathematically characterized in \cite{Monk} for Maxwell equations. 
	The results in \cite{Monk} were then extended to acoustic and elastic wave equations in \cite{ACTV, KW}. In this paper, we investigate the inverse source problem of the fourth-order biharmonic wave equation. We prove that the source can be decomposed into a radiating source and a non-radiating source. The radiating source can be uniquely identified by Dirichlet boundary measurements at a fixed wavenumber. We further derive a Lipschitz stability estimate for determining the radiating source. 
	The proof of the stability is unified and can be extended to acoustic, elastic, and electromagnetic wave equations.
	On the other hand, the non-radiating source does not produce any scattered fields outside the support of the source function which thus could not be identified.
	We point it out that the analysis for the biharmonic wave equation is more involved due to the increase of the order of the elliptic operator.

	For numerical methods, 
	recently a direct and effective Fourier-based method has been proposed to reconstruct source functions for acoustic, electromagnetic, and elastic waves using data at discrete multiple wavenumbers. We refer the reader to 
	\cite{WangMaGuoLiJDE, IP15, SongWang} and references therein for relevant studies.
	In this paper, we apply this Fourier method to
	investigate the more sophisticated biharmonic wave equation. 
	We would like to emphasize that the increase of the order brings many challenges to the computation. 
	To deal with this difficulty, we introduce two auxiliary functions based on a splitting of the biharmonic wave operator, which enables us to convert the fourth-order equation to two second-order equations.
	
	The rest of this paper is arranged as follows. \Cref{sec2:radiating} provides a characterization the radiating and non-radiating sources at a fixed wavenumber. In \Cref{sec:Fourier}, we develop a computational method to reconstruct the source function based on the Fourier expansion.  
	Numerical experiments are presented in \Cref{sec:numerical} to verify the effectiveness of the proposed method.

	\section{Characterization of the radiating and non-radiating sources at a fixed wavenumber}
	\label{sec2:radiating}
	
	In this section, we characterize the radiating and non-radiating sources. We start with the radiating sources.
	We say that $v\in H^2(B_R)$ is a weak solution to 
	the following homogeneous equation
	\begin{align}\label{homo}
		\Delta^2 v - k^4 v = 0
	\end{align}
	in the distributional
	sense if for every $\varphi\in C_0^\infty(B_R)$ one has
	\begin{align*}
		\int_{B_R} \Delta v \Delta \varphi - k^4 v \varphi {\rm d}x = 0.\label{eq:weak}
	\end{align*}
	Denote the set of weak solutions to \eqref{homo} by $\mathcal H(B_R)$ and denote by $H(B_R)$ the closure  of $\mathcal H(B_R)$ in the $L^2(B_R)$ norm. Then one has the following $L^2$-orthogonal decomposition 
	\[
	L^2(B_R) = H(B_R) + H^\perp(B_R).
	\]
	
	In the following theorem we characterize all the radiating sources which are contained in the functional space $H(B_R)$.
	\begin{theorem}\label{thm:2.1}
		Let $S \in H(B_R)$.
		The Dirichlet boundary measurements $u\vert_{\Gamma_R}, \Delta u\vert_{\Gamma_R}$ at a fixed wavenumber uniquely determine $S$. 
	\end{theorem}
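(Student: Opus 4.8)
The plan is to use linearity to reduce the claim to a vanishing statement and then combine exterior uniqueness with a Green's identity argument. Suppose two sources in $H(B_R)$ generate the same boundary data; by linearity it suffices to prove that any $S\in H(B_R)$ whose radiated field $u$ satisfies $u\vert_{\Gamma_R}=0$ and $\Delta u\vert_{\Gamma_R}=0$ must be identically zero. Since $S\in L^2(B_R)$ and $\Delta^2-k^4$ is elliptic, $u\in H^4_{\mathrm{loc}}$, so the traces of $u$, $\partial_\nu u$, $\Delta u$ and $\partial_\nu\Delta u$ on $\Gamma_R$ are all well defined.

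The central step is to upgrade the two vanishing Dirichlet traces to the full Cauchy data on $\Gamma_R$. Because $\mathrm{supp}\,S\subset B_R$, the field $u$ solves $\Delta^2 u-k^4 u=0$ in the exterior $\mathbb{R}^2\setminus\overline{B_R}$. I would factor $\Delta^2-k^4=(\Delta+k^2)(\Delta-k^2)$ and introduce the auxiliary fields $v_\pm:=(\Delta\pm k^2)u$, so that $u=(v_+-v_-)/(2k^2)$ and $\Delta u=(v_++v_-)/2$. In the exterior $v_-$ solves the Helmholtz equation $\Delta v_-+k^2 v_-=0$ while $v_+$ solves the modified Helmholtz equation $\Delta v_+-k^2 v_+=0$. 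Taking linear combinations of the two conditions in \eqref{src} shows that $v_-$ satisfies the Sommerfeld radiation condition and that $v_+$ is forced to be the exponentially decaying solution, while $u\vert_{\Gamma_R}=\Delta u\vert_{\Gamma_R}=0$ gives $v_\pm\vert_{\Gamma_R}=0$. Uniqueness for the exterior Dirichlet problem — Rellich's lemma together with unique continuation for $v_-$, and an energy estimate (or the maximum principle) for the positive operator $-\Delta+k^2$ for $v_+$ — then yields $v_\pm\equiv 0$ in $\mathbb{R}^2\setminus\overline{B_R}$. Hence $\partial_\nu v_\pm\vert_{\Gamma_R}=0$, and therefore $\partial_\nu u\vert_{\Gamma_R}=0$ and $\partial_\nu\Delta u\vert_{\Gamma_R}=0$.

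With all four traces vanishing, I would test against an arbitrary $v\in\mathcal H(B_R)$ via Green's second identity for the biharmonic operator,
\begin{align*}
\int_{B_R}\big(v\,\Delta^2 u-u\,\Delta^2 v\big)\,\mathrm{d}x=\int_{\Gamma_R}\big(v\,\partial_\nu\Delta u-\Delta u\,\partial_\nu v+\Delta v\,\partial_\nu u-u\,\partial_\nu\Delta v\big)\,\mathrm{d}s.
\end{align*}
The boundary integral vanishes termwise, whereas inserting $\Delta^2 u=k^4 u+S$ and $\Delta^2 v=k^4 v$ and cancelling the $k^4uv$ terms reduces the left-hand side to $\int_{B_R}Sv\,\mathrm{d}x$. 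Thus $\int_{B_R}Sv\,\mathrm{d}x=0$ for every $v\in\mathcal H(B_R)$.

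Because \eqref{homo} has real coefficients, $\mathcal H(B_R)$ is closed under complex conjugation, so the last identity states exactly that $S$ is $L^2$-orthogonal to $\mathcal H(B_R)$ and, by density, to $H(B_R)$. Since $S\in H(B_R)$ by hypothesis, the decomposition $L^2(B_R)=H(B_R)\oplus H^\perp(B_R)$ forces $\langle S,S\rangle=0$, i.e.\ $S=0$, which proves the theorem. I expect the main obstacle to be the exterior uniqueness of the second step: in contrast to the second-order case the radiation conditions are prescribed on both $u$ and $\Delta u$, and one must check carefully that the splitting decouples the field into a propagating Helmholtz part and an evanescent modified-Helmholtz part, each with the correct behaviour at infinity; the concluding density argument is then routine.
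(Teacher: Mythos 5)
Your proof is correct and takes essentially the same route as the paper: reduce by linearity to vanishing Dirichlet data, upgrade $u=\Delta u=0$ on $\Gamma_R$ to the full Cauchy data via exterior uniqueness, test against $v\in\mathcal H(B_R)$ with Green's second identity, and conclude by density and the orthogonal decomposition. The only difference is that you supply your own proof of the exterior uniqueness step (the Helmholtz/modified-Helmholtz splitting $v_\pm=(\Delta\pm k^2)u$, which matches the decomposition the paper itself uses later in Section 3.2), whereas the paper simply cites \cite[Theorem A.1]{GLL}; you also make explicit the conjugation-closedness of $\mathcal H(B_R)$ needed to pass from the bilinear pairing to $L^2$-orthogonality, a point the paper glosses over.
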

	
	\begin{proof}
		
		It suffices to prove that $u = \Delta u=0$ on $\Gamma_R$ gives $S\equiv 0$.
		Let $v\in \mathcal{H}(B_R)$. Multiplying both sides of \eqref{eq:biharmonic} by $v$ and integrating by parts twice over $B_R$ yield
		\begin{align*}
			\int_{B_R} S v {\rm d}x = \int_{\Gamma_R} \left(\partial_{\nu}\Delta u v -  \partial_{\nu} v \Delta u +  \partial_{\nu} u \Delta v - \partial_{\nu} \Delta v u \right){\rm d}s(x).
		\end{align*}
		Given $u = \Delta u = 0$ on $\Gamma_R$, we also have $\partial_{\nu} u = \partial_{\nu} \Delta u = 0$ on $\Gamma_R$ by the uniqueness result of 
		the exterior scattering problem in $\mathbb R^2\setminus B_R$ (cf \cite[Theorem A.1]{GLL}). Therefore, we have
		\[
		\int_{B_R} S v {\rm d}x  = 0.
		\]
		Since $\mathcal H(B_R)$ is dense in $H(B_R)$ in the norm of $L^2(B_R)$, using a density argument we have $S \equiv 0$.
		The proof is completed.
	\end{proof}

	We further derive a Lipschitz stability estimate of determining the radiating sources with the boundary measurements 
	\[
	u, \, \Delta u, \, \partial_\nu u, \, \partial_\nu \Delta u\quad \text{on}  \quad \Gamma_R
	\]
	at a fixed wavenumber. This method is unified and thus can be extended to study wave equations of second order.
	
	\begin{theorem}
		Let $S\in H(B_{R})$. It holds that
		\begin{align}
			\|S\|_{L^2(B_R)} \leq C\left(\int_{\Gamma_R}(|u|^2 + |\Delta u|^2 + |\partial_\nu u|^2 + |\partial_\nu \Delta u|^2){\rm d}s(x)\right)^{1/4},
		\end{align}
		where $C$ is a generic positive constant depending on $k.$
	\end{theorem}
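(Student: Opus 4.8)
The plan is to reuse the Green-type identity established in the proof of \Cref{thm:2.1}: for every $v \in \mathcal H(B_R)$,
\begin{align*}
\int_{B_R} S v \,\dd x = \int_{\Gamma_R}\bigl(\partial_\nu \Delta u\, v - \partial_\nu v\, \Delta u + \partial_\nu u\, \Delta v - \partial_\nu \Delta v\, u\bigr)\,\dd s(x).
\end{align*}
Since the operator $\Delta^2 - k^4$ has real coefficients and $k>0$, the space $\mathcal H(B_R)$, and hence its $L^2$-closure $H(B_R)$, is invariant under complex conjugation, so $\overline S \in H(B_R)$. Exploiting the density of $\mathcal H(B_R)$ in $H(B_R)$, I would select a sequence $v_n \in \mathcal H(B_R)$ with $v_n \to \overline S$ in $L^2(B_R)$; then the left-hand side above converges to $\int_{B_R} S\,\overline S\,\dd x = \|S\|_{L^2(B_R)}^2$.

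Next I would bound the boundary integral by the Cauchy--Schwarz inequality on $\Gamma_R$, isolating the measured Cauchy data of $u$ from the traces of the test field:
\begin{align*}
\Bigl|\int_{B_R} S v_n\,\dd x\Bigr| \le C\,D^{1/2}\,\bigl\|(v_n,\Delta v_n,\partial_\nu v_n,\partial_\nu\Delta v_n)\bigr\|_{L^2(\Gamma_R)},
\end{align*}
where $D$ denotes the boundary integral on the right-hand side of the asserted estimate. Letting $n\to\infty$ yields $\|S\|_{L^2(B_R)}^2 \le C\,D^{1/2}\,\limsup_n \|\gamma(v_n)\|_{L^2(\Gamma_R)}$, with $\gamma(v_n)$ abbreviating the four boundary traces of $v_n$. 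Once the trace factor is shown to remain bounded, this gives $\|S\|_{L^2(B_R)}^2 \le C\,D^{1/2}$, and taking square roots produces precisely the stated fourth-root estimate.

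The crux, and the main obstacle, is the control of the boundary traces $\gamma(v_n)$. Because $v_n$ converges to $\overline S$ only in $L^2(B_R)$, its Cauchy data on $\Gamma_R$ are \emph{not} controlled by $\|v_n\|_{L^2(B_R)}$ alone: high-order homogeneous solutions such as the modified Bessel modes $I_n(k|x|)\,\mathrm{e}^{\ii n\theta}$ concentrate near $\Gamma_R$, so that the ratio $\|\gamma(v)\|_{L^2(\Gamma_R)}/\|v\|_{L^2(B_R)}$ is unbounded over $\mathcal H(B_R)$. To overcome this I would establish an a priori trace estimate for homogeneous biharmonic solutions, using the interior smoothing afforded by $\Delta^2 v_n = k^4 v_n$ together with the trace theorem, and then combine it with the $L^2(B_R)$-approximation of $\overline S$ by interpolation, choosing the approximating sequence so that the approximation error and the trace growth are balanced so as to produce the asserted estimate. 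The passage from a second-order to a fourth-order operator makes this trace and regularity analysis more delicate, since the four measurements $u,\Delta u,\partial_\nu u,\partial_\nu\Delta u$ involve derivatives up to third order and therefore demand correspondingly higher interior regularity of the test fields.
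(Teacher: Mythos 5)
Your strategy is the paper's: test the equation against homogeneous solutions approximating (the conjugate of) $S$, apply Cauchy--Schwarz on $\Gamma_R$, and pass to the limit. But the proof is not complete. The step you yourself flag as ``the crux''---uniform control of the Cauchy data $v_n,\Delta v_n,\partial_\nu v_n,\partial_\nu\Delta v_n$ on $\Gamma_R$---is left as a plan rather than an argument, and the plan as stated would not deliver the theorem. If you merely balance the approximation error $\|v_n-\overline S\|_{L^2(B_R)}$ against a possibly divergent trace bound $M_n=\|\gamma(v_n)\|_{L^2(\Gamma_R)}$, you obtain $\|S\|^2\le \|S\|\,\|v_n-\overline S\|_{L^2(B_R)}+C\,D^{1/2}M_n$, and optimizing over $n$ yields an estimate whose quality depends on the rate at which $M_n$ grows relative to the rate at which the approximation error decays; generically this degrades to a conditional or logarithmic-type bound, not the asserted Lipschitz-type inequality $\|S\|^2\le C D^{1/2}$. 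Your own example of the modes $I_n(k|x|)\mathrm{e}^{\ii n\theta}$ shows that no uniform trace bound holds over the unit ball of $\mathcal H(B_R)$ in $L^2(B_R)$, so some additional input is indispensable, and the proposal does not supply it.

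The paper closes this gap at two points where your sketch diverges. First, it takes the test function to be $\overline{S_j}$, where $S_j\in\mathcal H(B_R)$ is the approximating sequence for $S$ itself, and asserts that $\Delta^2 S_j=k^4S_j$ in $B_R$ together with $\|S_j\|_{L^2(B_R)}\le C$ gives $\|S_j\|_{H^4(B_R)}\le C$, whence the four traces of $S_j$ on $\Gamma_R$ are uniformly bounded; this is precisely the a priori estimate your argument is missing, invoked as a global elliptic regularity statement for the specific sequence $S_j$. Second, instead of pairing against the fixed field $u$, the paper pairs $\overline{S_j}$ against the field $u_j$ radiated by $S_j$, which produces $\int_{B_R}|S_j|^2\,\dd x\le C\bigl(\int_{\Gamma_R}(|u_j|^2+|\Delta u_j|^2+|\partial_\nu u_j|^2+|\partial_\nu\Delta u_j|^2)\,\dd s\bigr)^{1/2}$, and then passes to the limit in the boundary data using the resolvent estimate $\|u-u_j\|_{H^4(B_R)}\le C\|S-S_j\|_{L^2(B_R)}$. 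To complete your version you would have to either establish the uniform trace bound for your chosen $v_n$ (which is the same regularity assertion the paper relies on, and which your Bessel-mode example shows cannot hold for an arbitrary bounded sequence in $\mathcal H(B_R)$) or adopt the paper's pairing and limit passage; as written, the argument does not reach the stated power $1/4$.
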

	
	\begin{proof}
		
		Since $S\in H(B_{R})$, there exists a sequence $\{S_j\}_{j=1}^\infty$ of weak solutions to \eqref{homo} in $H^2(B_{R})$ such that
		\[
		\lim_{j\to\infty} S_j = S \quad \text{in} \,\, L^2(B_{R}).
		\]
		Assume that $u_j$ is the radiating solution to \eqref{eq:biharmonic} corresponding to the source $S_j$, i.e.,
		\begin{equation}\label{eqn_1}
			\Delta^2 u_j - k^4 u_j = S_j.
		\end{equation}
		Since 
		\begin{align}\label{eqn_2}
			\Delta^2 S_j - k^4 S_j= 0 \quad \text{in} \quad B_{R}
		\end{align}
		and $\|S_j\|_{L^2(B_{R})}\leq C$,
		we have $\|\Delta^2S_j\|_{L^2(B_{R})}\leq C$ which gives
		$\|S_j\|_{H^4(B_R)}\leq C$. Here $C$ is a generic constant depending on $k$.

		Multiplying both sides of \eqref{eqn_1} by $\bar{S}_j$ and integrating by parts over $B_R$ yields
		\begin{align*}
			\int_{B_R} |S_j|^2{\rm d}x &= \int_{\Gamma_R} (\bar{S}_j \partial_\nu \Delta u_j  -  \Delta u_j\partial_\nu \bar{S}_j) {\rm d} s(x)\\
			&\quad + \int_{\Gamma_R} (\Delta\bar{S}_j \partial_\nu  u_j  -   u_j\partial_\nu \Delta\bar{S}_j) {\rm d} s(x).
		\end{align*}
		Since $\|S_j\|_{H^4(B_R)}\leq C$,
		by Schwartz's inequality we get
		\begin{align}\label{ineq}
			\int_{B_R} |S_j|^2{\rm d}x \leq C\left(\int_{\Gamma_R} (|u_j|^2 + |\Delta u_j|^2 + |\partial_\nu u_j|^2 + |\partial_\nu \Delta u_j|^2) {\rm d}s(x)\right)^{1/2}.
		\end{align}
		On the other hand, subtracting \eqref{eq:biharmonic} by \eqref{eqn_1} gives
		\[
		\Delta^2 (u - u_j) - k^4 (u - u_j) = S - S_j.
		\]
		Then the resolvent estimate \cite[Theorem~2.1]{LYZ} implies that
		\[
		\|u - u_j\|_{H^4(B_R)} \leq C\|S- S_j\|_{L^2(B_{R})}.
		\]
		Hence, $u_j\to u$ in $H^4(B_R)$ which gives by letting $j\to\infty$ in \eqref{ineq}
		that
		\[
		\|S\|^2_{L^2(B_R)} \leq C\left(\int_{\Gamma_R}\left( |u|^2 + |\Delta u|^2 + |\partial_\nu u|^2 + |\partial_\nu \Delta u|^2\right) {\rm d}s(x)\right)^{1/2}.
		\]
		The proof is completed.
	\end{proof}
	
	
	In the following theorem we prove that the orthogonal complement functional space $H^\perp(B_R)$ of $H(B_R)$ consists of all the non-radiating sources.
	
	\begin{theorem}\label{nrs}
		Let $S\in H^\perp(B_R)$ in the scattering problem \eqref{eq:biharmonic}--\eqref{src}. It holds that $u = \Delta u = \partial_\nu u = \partial_\nu\Delta u = 0$ on $\Gamma_R$.  
	\end{theorem}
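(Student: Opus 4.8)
The plan is to combine the Green representation $u(x)=\int_{B_R}G(x,y,k)S(y)\,\dd y$ with the defining orthogonality of $H^\perp(B_R)$. The key observation is that for any fixed $x\in\IR^2\setminus\overline{B_R}$ the kernel $y\mapsto G(x,y,k)$ is $C^\infty$ on a neighborhood of $\overline{B_R}$ (its only singularity, at $y=x$, lies outside the integration domain) and satisfies $\Delta_y^2 G(x,\cdot,k)-k^4 G(x,\cdot,k)=0$ in $B_R$. Hence $G(x,\cdot,k)\in\mathcal H(B_R)\subset H(B_R)$ for every exterior point $x$. Because the operator $\Delta^2-k^4$ has real coefficients, $\mathcal H(B_R)$ and therefore $H(B_R)$ are closed under complex conjugation, so the relation $S\perp H(B_R)$ is equivalent to $\int_{B_R}S(y)w(y)\,\dd y=0$ for every $w\in H(B_R)$, with no conjugate. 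Taking $w=G(x,\cdot,k)$ then yields at once
\[
u(x)=\int_{B_R}G(x,y,k)S(y)\,\dd y=0\qquad\text{for all }x\in\IR^2\setminus\overline{B_R}.
\]
First I would make the membership $G(x,\cdot,k)\in\mathcal H(B_R)$ precise by checking the weak formulation $\int_{B_R}(\Delta v\,\Delta\varphi-k^4 v\varphi)\,\dd x=0$ directly from the pointwise identity, and then deduce $u\equiv 0$ on the open exterior.

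Second, to pass from vanishing in the exterior to the vanishing of the four boundary quantities on $\Gamma_R$, I would invoke the global regularity of $u$. Since $S\in L_{\rm comp}^2(\IR^2)$, the resolvent estimate \cite[Theorem~2.1]{LYZ} gives $u\in H^4_{\rm loc}(\IR^2)$, so in a neighborhood of $\Gamma_R$ the field $u$ is a single-valued $H^4$ function whose traces carry no jump across $\Gamma_R$. The one-sided exterior traces of $u,\partial_\nu u,\Delta u,\partial_\nu\Delta u$ all vanish because $u\equiv 0$ on the exterior side; by the no-jump property these coincide with the traces taken on $\Gamma_R$, whence $u=\Delta u=\partial_\nu u=\partial_\nu\Delta u=0$ on $\Gamma_R$.

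The main obstacle is precisely the bookkeeping at the boundary: the kernel $G(x,\cdot,k)$ ceases to be a homogeneous solution once $x\in\overline{B_R}$, so the identity $u(x)=0$ is available only for $x$ strictly outside $\overline{B_R}$ and cannot be evaluated on $\Gamma_R$ itself. Everything therefore hinges on the limiting/trace argument, i.e. on the $H^4_{\rm loc}$ regularity guaranteeing that the exterior limits of the Cauchy data agree with the boundary traces. The fourth-order nature of the equation makes this step heavier than in the second-order acoustic case, since one must control traces up to third order (through $\partial_\nu\Delta u$) rather than merely $u$ and $\partial_\nu u$.
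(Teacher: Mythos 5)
Your proof is correct, but it follows a genuinely different route from the paper's. The paper constructs a radiating solution $w$ of the homogeneous biharmonic equation in $\mathbb R^2\setminus\Gamma_R$ with arbitrarily prescribed jumps $\xi_1,\xi_2,\eta_1,\eta_2$ of the four Cauchy data across $\Gamma_R$ (asserted to exist via layer-potential representations as in \cite{CK}), integrates by parts over $B_r$ and lets $r\to\infty$ so the radiation conditions kill the far-field boundary term, and then uses $\int_{B_R}Sw\,\dd x=0$ (since $w|_{B_R}\in\mathcal H(B_R)$) together with the arbitrariness of the jumps to force all four traces to vanish at once. You instead test the orthogonality against the explicit kernel $G(x,\cdot,k)$ for exterior $x$, which correctly lies in $\mathcal H(B_R)$, to conclude $u\equiv 0$ in $\mathbb R^2\setminus\overline{B_R}$, and then recover the boundary traces by a regularity/no-jump argument. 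Your handling of the complex conjugate via closure of $\mathcal H(B_R)$ under conjugation is a point the paper silently glosses over, so making it explicit is a genuine improvement. Your approach buys a stronger conclusion (the field vanishes identically outside $\overline{B_R}$, which is the real meaning of ``non-radiating'') and avoids having to justify solvability of the biharmonic transmission problem, at the cost of relying on the representation formula \eqref{eq:solution1} and on regularity of $u$ near $\Gamma_R$; the paper's approach is representation-free but leaves the existence of $w$ to an analogy. One simplification you could make in your second step: since $\operatorname{supp}S$ is a compact subset of the open ball $B_R$, it has positive distance to $\Gamma_R$, so $u$ solves the homogeneous elliptic equation in an annular neighborhood of $\Gamma_R$ and is therefore smooth there; the four traces are then classical limits from the exterior and vanish by continuity, with no need to invoke $H^4_{\rm loc}$ trace theory for the borderline term $\partial_\nu\Delta u$.
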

	
	\begin{proof}
		
		Let $w\in H^2_{\rm loc}(\mathbb R^2\setminus \Gamma_R)$ be a solution to the following transmission problem
		\begin{align*}
			\begin{cases}
				\Delta^2 w - k^2 w = 0 \quad \text{in} \, \,  \mathbb R^2\setminus \Gamma_R,\\
				[w]_{\Gamma_R} = \xi_1, \quad [\Delta w]_{\Gamma_R} = \xi_2, \quad \text{on} \,\, \Gamma_R,\\
				[\partial_\nu w]_{\Gamma_R} = \eta_1, \quad [\partial_\nu \Delta w]_{\Gamma_R} = \eta_2 \quad \text{on} \,\, \Gamma_R,\\
				w  \, \, \text{and} \, \, \Delta w \, \, \text{satisfy} \, \, \eqref{src}.
			\end{cases}
		\end{align*}
		where $\xi_1\in H^{3/2}(\Gamma_R), \, \xi_2 \in H^{-1/2}(\Gamma_R)$ and $\eta_1\in H^{1/2}(\Gamma_R), \, \eta_2\in H^{-3/2}(\Gamma_R)$ and $[\cdot]_{\Gamma_R}$ denotes the jump of the traces from both sides of $\Gamma_R$. Such a solution can be found through integral representations using only the boundary data 
		$\xi_1,\xi_2,\eta_1,\eta_2$, which is
		analogous to the integral representations in \cite{CK} of the solutions to the classical acoustic
		transmission problems.
		Multiplying both sides of \eqref{eq:biharmonic} by $w$
		and integrating by parts over $B_{r}$ with $r>R$ and then letting $r\to\infty$, we arrive at
		\begin{align*}
			\int_{B_R} S w {\rm d}x = \int_{\Gamma_R} \left(\partial_{\nu}\Delta u \xi_1 -  \eta_1 \Delta u +  \partial_{\nu} u \xi_2 - \eta_2 u\right)\;{\rm d}s(x).
		\end{align*}
		Here we have used the jump conditions on $\Gamma_R$ and the radiating conditions \eqref{src} to eliminate the boundary integral on $\Gamma_r$ by letting $r\to\infty$. 
		Then by the arbitrariness of $\xi_i$ and $\eta_i$ for $i = 1, 2,$ we have
		$u = \Delta u = \partial_\nu u = \partial_\nu\Delta u = 0$ on $\Gamma_R$. The proof is completed.
	\end{proof}
	
	From Theorem \ref{nrs} we see that the non-radiating source does not produce surface measurements on $\Gamma_R$ which thus
	cannot be identified at a fixed wavenumber.

	\section{Fourier-based method to reconstruct the source function}\label{sec:Fourier}
	
	In this section, we develop a Fourier-based method to reconstruct the source function by Dirichlet boundary measurements at multiple wavenumbers.
	As discussed in Introduction, the use of multiple wavenumbers is necessary in order to achieve an accurate reconstruction.

	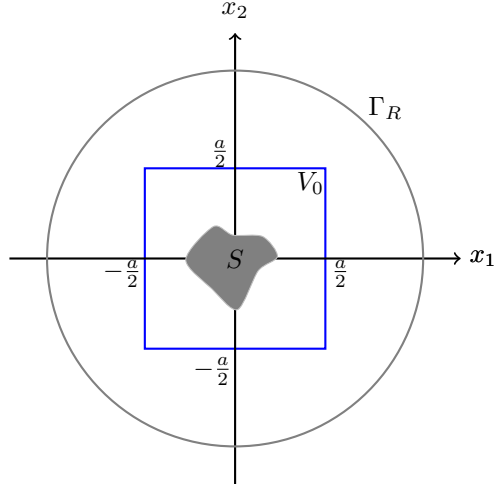
\begin{figure}[htp]
		\centering
		\thispagestyle{empty}
		
		\begin{center}
			\begin{tikzpicture}
				\draw [blue, thick](-1.2,-1.2) rectangle (1.2,1.2);
				\draw [thick,->] (-3,0) -- (3,0);
				\draw [thick,->] (0,-3) -- (0,3) node at (0,3.3) {$x_2$} node at (2,2) {$\Gamma_R$};
				\draw  node at (3.3,0) {$x_1$}  node at (1.4,-0.2) {$\frac{a}{2}$}  node at (-1.5,-0.2) {$-\frac{a}{2}$};
				\draw  node at (3.3,0) {$x_1$}  node at (-.2,1.4) {$\frac{a}{2}$}  node at (-.3,-1.5) {$-\frac{a}{2}$};
				\draw [thick,gray] (0,0) circle (2.5);\draw node at (1,1) {$V_0$};
				\pgfmathsetseed{8}			
				\draw (2,2) plot[gray,smooth cycle, samples=8, domain={1:8}] (2+\x*360/8+3*rnd:.2cm+.5cm*rnd) node at (0, 0) {$S$}[lightgray,fill=gray];
			\end{tikzpicture}
		\end{center}
		\caption{Illustration of the inverse source problem.}\label{fig: setup}
	\end{figure}
	
	\subsection{Fourier approximation}\label{Fourier}
	
	Let ${\bm l}=(l_1,l_2)\in\mathbb{Z}^2$. Denote $\mathbb{K}=\{k_{\bm l}: k_{\bm l} = \frac{2\pi}{a}|\bm l|, \, \bm l\in\mathbb Z^2, \, | \bm l|_\infty\leq N, \, N\in\mathbb N^+\}$.
	In this section,
	we propose a Fourier method to approximate the source function $S(x)$ from the following Dirichlet boundary data at multiple frequencies
	\[
	\{u(x, k),\Delta u(x, k):x\in\Gamma_R,k\in\mathbb{K}\}.
	\]
	For the geometry setup of the inverse source problem, we refer to \Cref{fig: setup} in which the square $V_0$ is set to be
	$$
	V_0=\left(-\frac{a}{2},\frac{a}{2}\right)\times\left(-\frac{a}{2},\frac{a}{2}\right), \ \ a>0,
	$$
	such that $\text{supp}S\subset V_0\subset B_R.$
	
	Denote by $ \phi_{\bm l}(x)$ the Fourier basis functions 
	\begin{align*}
		\phi_{\bm l}(x)=\mathrm{e}^{\mathrm{i}\frac{2\pi}{a}{\bm l}\cdot x},\quad{\bm l}\in\mathbb{Z}^2.
	\end{align*}
	Then the source function $S(x)$ has the following expansion
	\begin{align}\label{FE}
		S(x) = \sum_{\bm l\in\mathbb{Z}^2} \hat{s}_{\bm{l}} \phi_{\bm l}(x).
	\end{align}
	Based on the biharmonic wave equation \eqref{eq:biharmonic},
	we calculate the Fourier coefficients $\hat{s}_{\bm{l}}$ as follows.
	Multiplying both sides of \eqref{eq:biharmonic} by $\overline{ \phi_{\bm l}(x)}$ and integrating by parts over the domain $V_0$ gives 
	\begin{align}\nonumber
		\hat{s}_{\bm{l}}=&\frac{1}{a^2}\int_{V_0}S(x)\overline{ \phi_{\bm{l}}(x)}\mathrm{d}x=\frac{1}{a^2}\int_{B_R}\left(\Delta^2u(x, k_{\bm l})-k^4u(x, k_{\bm l})\right)\overline{ \phi_{\bm{l}}(x)}\mathrm{d}x\\
		\nonumber=&\frac{1}{a^2}\int_{\Gamma_R}\left(\partial_\nu\Delta u(x, k_{\bm l})+\ii\frac{2\pi}{a}({\bm l}\cdot\nu)\Delta u(x, k_{\bm l})\right)\overline{ \phi_{\bm{l}}(x)}\mathrm{d}s(x)\\
		\label{eq:s_l}&-\frac{k^2}{a^2}\int_{\Gamma_R}\left(\partial_\nu u(x, k_{\bm l})+\ii\frac{2\pi}{a}({\bm l}\cdot\nu) u(x, k_{\bm l})\right)\overline{ \phi_{\bm{l}}(x)}\mathrm{d}s(x),\ \ {\bm l}\in\mathbb{Z}^2.
	\end{align}
	The main idea of our method is to approximate the source functions $S$ in \eqref{eq:biharmonic} by the following finite Fourier expansion 
	\begin{align}\label{eq:SN}
		S_N(x)=\sum_{\bm l\in\mathbb K}\hat{s}_{\bm{l}} \phi_{\bm l}(x).
	\end{align}
	
	
	From \eqref{eq:s_l} we see that the Fourier coefficient $\hat{s}_{\bm 0}$ is related to the boundary data corresponding to $k = 0$.
	However, in practice the data at zero wavenumber is hard to achieve. To resolve this issue, we introduce the following admissible wavenumbers.
	The idea is to approximate the Fourier basis $\phi_{\bm 0}$ for $\bm l = 0$ by $\phi_{\bm l_0}$ where the vector $\bm l_0$ is close to $\bm 0$.
	\begin{definition}[Admissible wavenumbers]
		Let $\lambda$ be a small positive constant such that $\frac{2\pi}{a}\lambda<\frac{1}{2}$ and let
		\begin{align*}
			\bm{l}_0:=(\lambda,0).
		\end{align*}
		The admissible wavenumbers are defined by
		\begin{align}\label{eq:wavenumber}
			k_{\bm{l}}:=\left\{
			\begin{array}{cc}
				\frac{2\pi}{a}|\bm{l}|, & {\bm{l}}\in\mathbb{Z}^2\backslash\{\bm{0}\}, \\[3mm]
				\frac{2\pi}{a}\lambda,& {\bm{l}}=\bm l_0.
			\end{array}
			\right.
		\end{align}
	\end{definition}
	
	We approximate the source function $S$ by the Fourier series
	\begin{align*}
		\hat{s}_{\bm l_{0}}\phi_{\bm l_0}+\sum_{|{\bm{l}}|_\infty\ge 1}\hat{s}_{\bm{l}} \phi_{\bm{l}}(x).
	\end{align*} 
	To seek for $\hat{s}_{\bm l_0},$ as before we multiply both sides of \eqref{eq:biharmonic} by $\overline{ \phi_{\bm{l}_0}}$ and integrate by parts over $B_R$ which gives
	\begin{align*}
		&\frac{1}{a^2}\int_{\Gamma_R}\left(\partial_\nu\Delta u(x, k_{\bm l_0})+\ii\frac{2\pi}{a}({\bm{l}_0\cdot \nu})\Delta u(x, k_{\bm l_0})\right)\overline{ \phi_{\bm{l}_0}}\dd s(x)\\
		&\quad
		-\frac{k^2}{a^2}\int_{\Gamma_R}\left(\partial_\nu u(x, k_{\bm l_0})+\ii\frac{2\pi}{a}({\bm{l}_0\cdot \nu}) u(x, k_{\bm l_0})\right)\overline{ \phi_{\bm{l}_0}}\dd s(x)\\
		=&\frac{1}{a^2}\int_{V_0}\hat{s}_{\bm{l}_0}\overline{ \phi_{\bm{l}_0}(x)}\dd x+\frac{1}{a^2}\sum_{|\bm{l}|_\infty\ge1}\hat{s}_{\bm{l}}\int_{V_0} \phi_{\bm{l}}(x)\overline{ \phi_{\bm{l}_0}(x)}\dd x\\
		=&\frac{\sin\lambda \pi}{\lambda\pi}\hat{s}_{\bm{l}_0}+\frac{1}{a^2}\sum_{|{\bm{l}}|_\infty\ge1}\hat{s}_{\bm{l}}\int_{V_0} \phi_{\bm{l}}(x)\overline{ \phi_{\bm{l}_0}(x)}\dd x.
	\end{align*}
	As a consequence, we obtain the computation formula for $\hat{s}_{\bm{l}_0}$ as follows:
	\begin{align}
		\nonumber \hat{s}_{\bm l_0}=&\frac{\lambda\pi}{a^2\sin\lambda\pi}\int_{\Gamma_R}\left(\partial_\nu\Delta u(x, k_{\bm l_0})+\ii\frac{2\pi}{a}({\bm{l}}_0\cdot\nu)\Delta u(x, k_{\bm l_0})\right)\overline{ \phi_{\bm{l}_0}}\dd s(x)
		\\&-\frac{\lambda\pi k^2}{a^2\sin\lambda\pi}\int_{\Gamma_R}\left(\partial_\nu u(x, k_{\bm l_0})+\ii\frac{2\pi}{a}({\bm{l}}_0\cdot\nu) u(x, k_{\bm l_0})\right)\overline{ \phi_{\bm{l}_0}} \dd s(x)\nonumber\\
		&-\frac{\lambda\pi}{a^2\sin\lambda\pi}\sum_{1\le|{\bm l}|_\infty\le N}\hat{s}_{\bm l}\int_{V_0} \phi_{\bm l}(x)\overline{ \phi_{\bm{l}_0}(x)}\dd x.
		&\label{eq:s0}
	\end{align}
	We take $\hat{s}_{\bm l_0}$ as an approximation of $\hat{s}_{\bm 0}$.
	
	\subsection{A stable substitution technique}\label{stable}
	
	In practice, it is usually the case that we are only given the Dirichlet boundary measurements $u(x, k)|_{\Gamma_R}$ and $\Delta u(x, k)|_{\Gamma_R}$. However, to utilize the integral identities (\ref{eq:s_l}) and (\ref{eq:s0}), the normal derivatives $\partial_\nu u(x, k)|_{\Gamma_R}$ and $\partial_\nu \Delta u(x, k)|_{\Gamma_R}$ are also required. Additionally, as suggested in \cite{IP15} for the acoustic wave equation, we shall reconstruct the Fourier
	coefficients $\hat{s}_{\bm{l}}, \bm{l}\in\mathbb{Z}^2$ using the boundary data on $\Gamma_{\rho}$ for $\rho>R$. To this end, analogous to the analysis in Section~\ref{Fourier}, the Fourier coefficients $\hat{s}_{\bm{l}}$ can also be calculated through the following integral identity for $\rho>R$:
	\begin{align}\nonumber
		\hat{s}_{\bm{l}}=&\frac{1}{a^2}\int_{\Gamma_\rho}\left(\partial_\nu\Delta u+\ii\frac{2\pi}{a}({\bm l}\cdot\nu)\Delta u\right)\overline{ \phi_{\bm{l}}(x)}\mathrm{d}s(x)\\
		\label{eq:s_l1}&-\frac{k^2}{a^2}\int_{\Gamma_\rho}\left(\partial_\nu u+\ii\frac{2\pi}{a}({\bm l}\cdot\nu) u\right)\overline{ \phi_{\bm{l}}(x)}\mathrm{d}s(x),\ \ 
		{\bm l}\in\mathbb{Z}^2\setminus\bm 0,
	\end{align}
	and
	\begin{align}
		\nonumber  \hat{s}_{\bm{l}_0}=&\frac{\lambda\pi}{a^2\sin\lambda\pi}\int_{\Gamma_\rho}\left(\partial_\nu\Delta u+\ii\frac{2\pi}{a}({\bm{l}}_0\cdot\nu)\Delta u\right)\overline{ \phi_{\bm{l}_0}}\dd s(x)
		\\&-\frac{\lambda\pi k^2}{a^2\sin\lambda\pi}\int_{\Gamma_\rho}\left(\partial_\nu u+\ii\frac{2\pi}{a}({\bm{l}}_0\cdot\nu) u\right)\overline{ \phi_{\bm{l}_0}} \dd s(x)\nonumber\\
		&-\frac{\lambda\pi}{a^2\sin\lambda\pi}\sum_{1\le|{\bm l}|_\infty\le N}\hat{s}_{\bm l}\int_{V_0} \phi_{\bm l}(x)\overline{ \phi_{\bm{l}_0}(x)}\dd x.
		&\label{eq:s01}
	\end{align}
	
	Given the measurements $u(x, k)|_{\Gamma_R}$ and $\Delta u(x, k)|_{\Gamma_R}$, in what follows we show that the boundary terms $u(x, k)|_{\Gamma_\rho}$, $\Delta u(x, k)|_{\Gamma_\rho}$, $\partial_\nu u(x, k)|_{\Gamma_\rho}$ and $\partial_\nu \Delta u(x, k)|_{\Gamma_\rho}$ in the integrals (\ref{eq:s_l1})-(\ref{eq:s01}) can be obtained via the series expansion of the solutions exterior to $\Gamma_R$. To this end,
	introduce two auxiliary functions
	\begin{align}\label{eq:vHvM}
		u_H=-\frac{1}{2k^2}(\Delta u-k^2u),\quad u_M=\frac{1}{2k^2}(\Delta u+k^2u).
	\end{align}
	Then we decompose $u$ and $\Delta u$ in the exterior domain $\mathbb{R}^2\backslash\overline{B_R}$ by
	\begin{align}\label{eq:vDeltav}
		u=u_H+u_M,\quad \Delta u=k^2(u_M-u_H),
	\end{align}
	where $u_H$ and $u_M$ satisfy the following Helmholtz and modified Helmholtz equations
	\begin{align}
		\begin{cases}
			\Delta u_H+k^2u_H=0 \quad\mbox{in}\quad \mathbb{R}^2\backslash\overline{B_R},\cr
			\lim_{r\to\infty}\sqrt{r} (\partial_r u_H-\mathrm{i}k u_H)=0,
		\end{cases} \quad
		\begin{cases}
			\Delta u_M-k^2u_M=0 \quad\mbox{in}\quad \mathbb{R}^2\backslash\overline{B_R},\cr
			\lim_{r\to\infty}\sqrt{r} (\partial_r u_M+\mathrm{i}k u_M)=0.
		\end{cases}
	\end{align}
	Based on the above decomposition, to compute $u, \Delta u$ and $\partial_\nu u, \partial_\nu\Delta u$ on $\Gamma_\rho$
	it suffices to compute $u_\xi(x, k)|_{\Gamma_\rho}$, $\partial_\nu u_\xi(x, k)|_{\Gamma_\rho}$, $\xi\in\{H,M\}$. Using the polar coordinates $(r,\theta):x=r(\cos\theta,\sin\theta)$, the solutions $u_H$ and $u_M$ admits 
	the following series expansions for $r\ge R$,
	\begin{align*}
		u_H(x, k)=\sum_{n\in\mathbb{Z}}\frac{H_n^{(1)}(kr)}{H_n^{(1)}(kR)}\hat{u}^{H}_{k,n}\mathrm{e}^{\ii n\theta},\ \
		u_M(x, k)=\sum_{n\in\mathbb{Z}}\frac{K_n(kr)}{K_n(kR)}\hat{u}^{M}_{k,n}\mathrm{e}^{\ii n\theta},
	\end{align*}
	where $H_n^{(1)}$ and $K_n$ denote the first-kind Hankel function and second-kind modified Bessel function, respectively, with order $n$ and the Fourier coefficients $\hat{u}^{H}_{k,n}$ and $\hat{u}^{M}_{k, n}$ can be obtained using the boundary data
	$u_H(x, k)|_{\Gamma_R}$ and $u_M(x, k)|_{\Gamma_R}$ as follows
	\begin{align*}
		\hat{u}^\xi_{k, n} & =\frac{1}{2\pi}\int_0^{2\pi}u_\xi(R,\theta;k)\mathrm{e}^{-\ii n\theta}\mathrm{d}\theta,\quad \xi\in\{H,M\}.
	\end{align*}
	Therefore, for $x\in\Gamma_\rho$ we have
	\begin{align}\label{eq:vHvM_expansion}
		u_H(x, k)=\sum_{n\in\mathbb{Z}}\frac{H_n^{(1)}(k\rho)}{H_n^{(1)}(kR)}\hat{u}^{H}_{k,n}\mathrm{e}^{\ii n\theta},\ \
		u_M(x, k)=\sum_{n\in\mathbb{Z}}\frac{K_n(k\rho)}{K_n(kR)}\hat{u}^{M}_{k,n}\mathrm{e}^{\ii n\theta},
	\end{align}
	and
	\begin{align}\label{eq:partialvHvM_expansion}
		\partial_{\nu}u_H(x, k)=\sum_{n\in\mathbb{Z}}k\frac{{H_n^{(1)}}'(k\rho)}{H_n^{(1)}(kR)}\hat{u}^{H}_{k,n}\mathrm{e}^{\ii n\theta},\ \
		\partial_{\nu}u_M(x, k)=\sum_{n\in\mathbb{Z}}k\frac{{K_n}'(k\rho)}{K_n(kR)}\hat{u}^{M}_{k,n}\mathrm{e}^{\ii n\theta}.
	\end{align}
	Thus, from \eqref{eq:vDeltav} we have that the boundary values of $u(x, k)$, $\Delta u(x, k)$, $\partial_\nu u(x, k)$ and $\partial_\nu \Delta u(x, k)$
	on $\Gamma_\rho$
	can be computed by the Dirichlet data $u(x, k)$ and $\Delta u(x, k)$ on $\Gamma_R$.
	
	
	\section{Numerical examples}\label{sec:numerical}
	In this section, we present several numerical examples to verify the performance of the proposed Fourier method. The synthetic data are generated via the integration. For $S\in L^2(\mathbb{R}^2)$ with $\text{supp }S\subset V_0,$ the unique solution to \eqref{eq:biharmonic} subject to the radiation condition \eqref{src} is given by \eqref{eq:solution1}. 
	For the numerical implementation of the Fourier method, the computation domain is chosen to be $V_0=[-0.5,0.5]\times[-0.5,0.5]$ in the first two examples and $V_0 = [-3,3]\times[-3,3]$ in the last example. The volume integral over $V_0$ is evaluated over a $201\times 201$ grid of uniformly spaced points $x_m\in V_0,\,m=1,\cdots, 201^2.$
	Once the radiated data is obtained, we further disturb the data by
	\begin{align*}
		u^\delta&:=u+\delta r_1|u|\mathrm{e}^{\ii\pi r_2}, \\
		\Delta u^\delta & :=\Delta u+\delta r_3|u|\mathrm{e}^{\ii\pi r_4},
	\end{align*}
	where $r_\ell,\ell=1,\cdots,4$ are uniformly distributed random numbers ranging from  $-1$ to 1, and $\delta\in[0,1)$ is the noise level.
	As a rule of thumb, without otherwise specified, we choose the truncation $N$ by
	\begin{align}\label{eq:N}
		N:=5\left[\delta^{-1/4}\right],
	\end{align}
	where $[X]$ denotes the largest integer that is smaller than $X+1.$
	Once $N=N(\delta)$ is given, the wavenumber $k_{\bm l}$ for ${\bm l}\in\mathbb{Z}^2$ is given by
	\begin{align*}
		k_{\bm l}  =\left\{
		\begin{aligned}
			&\frac{10\pi}{3}|{\bm l}|:{\bm l}\in\mathbb{Z}^2,1\le|{\bm l}|_\infty\le N,\\
			&\frac{10\pi}{3}\lambda,\quad|{\bm l}|_\infty=0,
		\end{aligned}
		\right.
	\end{align*}
	with $\lambda=10^{-3}.$
	Given the admissible wavenumbers, we measure the radiated data
	\begin{align*}
		\left\{
		u(k_i,R,\theta_j):k_i\in\mathbb{K}\cup\{k_{\bm 0}\},i=1,\cdots,(2N+1)^2,\theta_j\in[0,\theta_{\max}],j=1,\cdots, N_m
		\right\}
	\end{align*}
	on the measurement circle centered at the origin with $R=0.8$ for the first two examples and $R=5$ for the last example. $N_m=200$ equally spaced measurement angles $\theta_j=j\theta_{\max}/N_m$ where $\theta_{\max }$ is the measurement aperture.
	
	To evaluate the approximate Fourier coefficients defined in \eqref{eq:s_l1} and \eqref{eq:s01}, we calculate
	\begin{align*}
		\left\{\mathcal{M}(\rho,\Theta_j; k_i):k_i\in  \mathbb{K}\cup\{k_{\bm 0}\}:i=1,\cdots,(2N+1)^2,
		\Theta_j\in [0,2\pi]:j=1,\cdots, 200 \right\},
	\end{align*}
	where $\mathcal{M}$ stands for $u^\delta,\,\Delta u^\delta,\,\partial_{\nu_\rho}u^\delta$ and $\partial_{\nu}\Delta u^\delta,$ respectively.
	Here, we take $\Theta_j=\frac{j\pi}{100}.$ In the first two examples, $\rho=2.$ In the last example, $\rho=6.$  To approximate the infinite series in \eqref{eq:vHvM_expansion} and \eqref{eq:partialvHvM_expansion}, we numerically truncate them by $|n|\le 60.$
	Under these parameters, the approximate source function $S_N(x)$ is computed by \eqref{eq:SN}.
	
	To evaluate the performance qualitatively, we compute the discrete relative $L^2$ error:
	\begin{align*}
		\frac{\|S_N^\delta-S\|_{0}}{\|S\|_{0}}:=\frac{\left(\sum_{m=1}^{201^2}\left|S_N^\delta(x_m)-S(x_m)\right|^2\right)^{1/2}}{\left(\sum_{m=1}^{201^2}\left|S(x_m)\right|^2\right)^{1/2}},
	\end{align*}
	and $H^1$ error:
	\begin{align*}
		\frac{\|S_N^\delta-S\|_1}{\|S\|_1}:=\frac{\left(\sum_{m=1}^{201^2}\left|\nabla S_N^\delta(x_m)-\nabla S(x_m)\right|^2+\left|S_N^\delta(x_m)-S(x_m)\right|^2\right)^
			{1/2}}{\left(\sum_{m=1}^{201^2}\left|\nabla S(x_m)\right|^2+\left|S(x_m)\right|^2\right)^{1/2}},
	\end{align*}
	respectively.
	\begin{example}\label{exa1}
		In the first numerical example, we consider the reconstruction of a smooth function of mountain-shape, which is given by
		\begin{align*}
			S_1(x_1,x_2)=1.1\mathrm{e}^{-200\left((x_1-0.01)^2+(x_2-0.12)^2\right)}-100\left(x_2^2-x_1^2\right)\mathrm{e}^{-90\left(x_1^2-x_2^2\right)}.
		\end{align*}
		In \Cref{fig:S1_exact}, we plot the surface and the contour of the exact source function. To compare the accuracy of the reconstruction, we list the relative $L^2$ and $H^1$ errors in \Cref{tab:error} under different noise levels. In addition, we also display the CPU time of the inversion in the last row of \Cref{tab:error} (All the computations in the numerical experiments are run on an Intel Core 2.6 GHz laptop). By taking $\delta$ to be $10\%,$ we exhibit the reconstruction of the source in \Cref{fig:S1_reconstruction}. From  \Cref{fig:S1_exact} and \Cref{tab:error}, we can see that the source function can be well-reconstructed under different noise levels. Comparing different columns in \Cref{tab:error}, we find that our method is stable and insensitive to random noise.
		
		Though there exists an alternative choice of the truncation $N$ in \eqref{eq:N}, it is $N$ that mainly influences the effect of reconstruction in our method. If $N$ is not chosen properly, the reconstruction may be unsatisfactory. To illustrate the influence of the truncation, we take 
		\begin{align}
			\label{eq:N2}N=2[\delta^{-1/3}],
		\end{align}
as done in \cite{IP15} and reconsider the reconstruction the results obtained in \Cref{fig:S1_reconstruction} with other parameters unchanged. The reconstruction is shown in \Cref{fig:S1_N}. As shown in \Cref{fig:S1_N}, the reconstruction is destructed due to the inappropriate choice of $N.$ In this case, the relative errors of the reconstruction are
		\begin{align*}
			\frac{\|S_1^\delta-S_1\|_0}{\|S_1\|_0} =10.46\%,\ \
			\frac{\|S_1^\delta-S_1\|_1}{\|S_1\|_1} =18.87\%,
		\end{align*}
which are much larger than the errors with $N$ defined in \eqref{eq:N}. Thus, it is of vital importance to choose a proper truncation $N$ of the Fourier expansion. Since an optimal choice of $N$ is still open, we would like to take $N$ as defined in \eqref{eq:N} in the following numerical experiments.
		
	\begin{figure}[htp]
			\centering
			\subfigure[]{\includegraphics[width=0.48\textwidth]{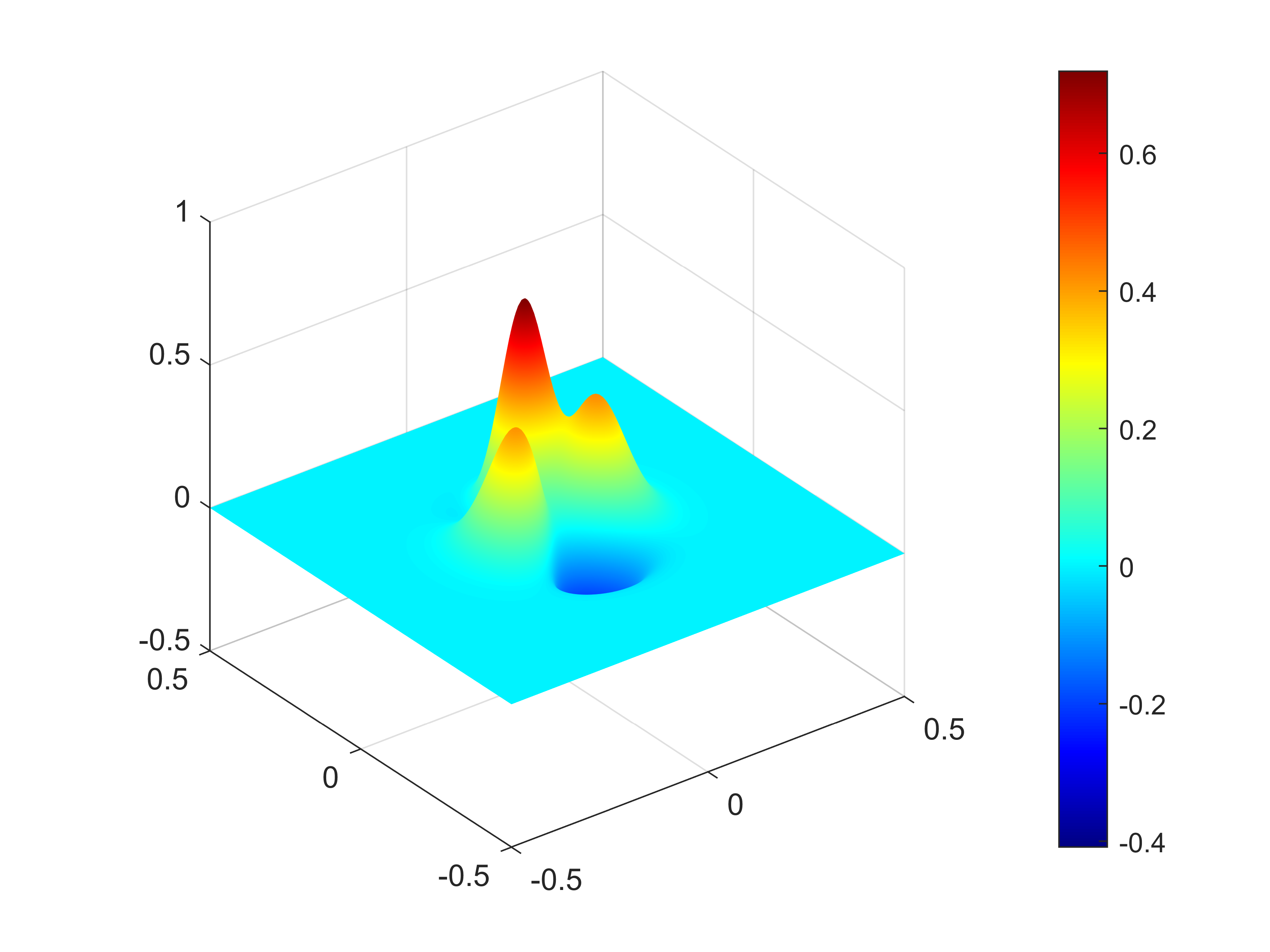}}
			\subfigure[]{\includegraphics[width=0.48\textwidth]{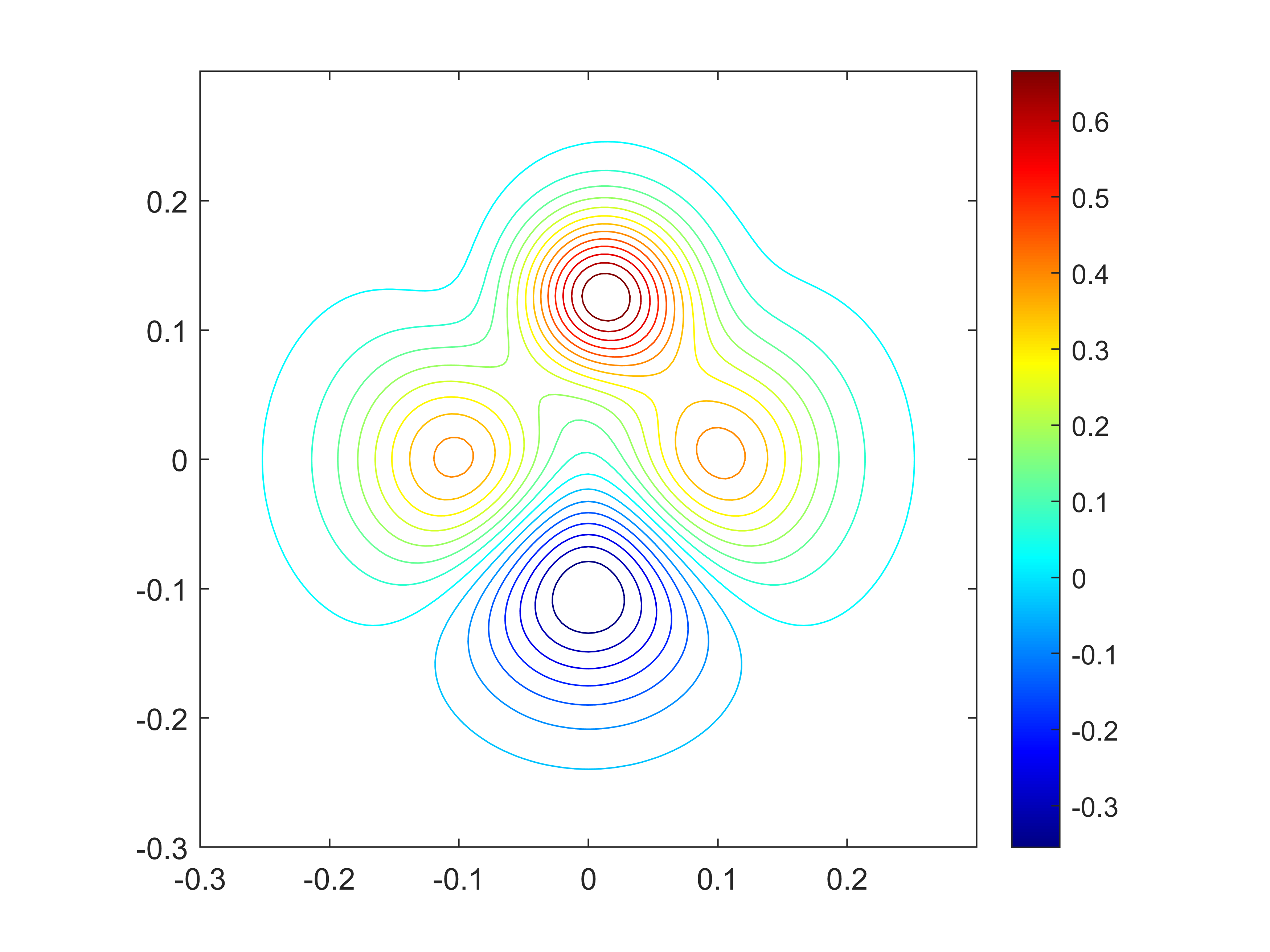}}
			\caption{The exact source function $S_1.$ (a) Surface plot; (b) contour plot.}\label{fig:S1_exact}
	\end{figure}  
		
		\begin{figure}[htp]
			\centering
			\subfigure[]{\includegraphics[width=0.48\textwidth]{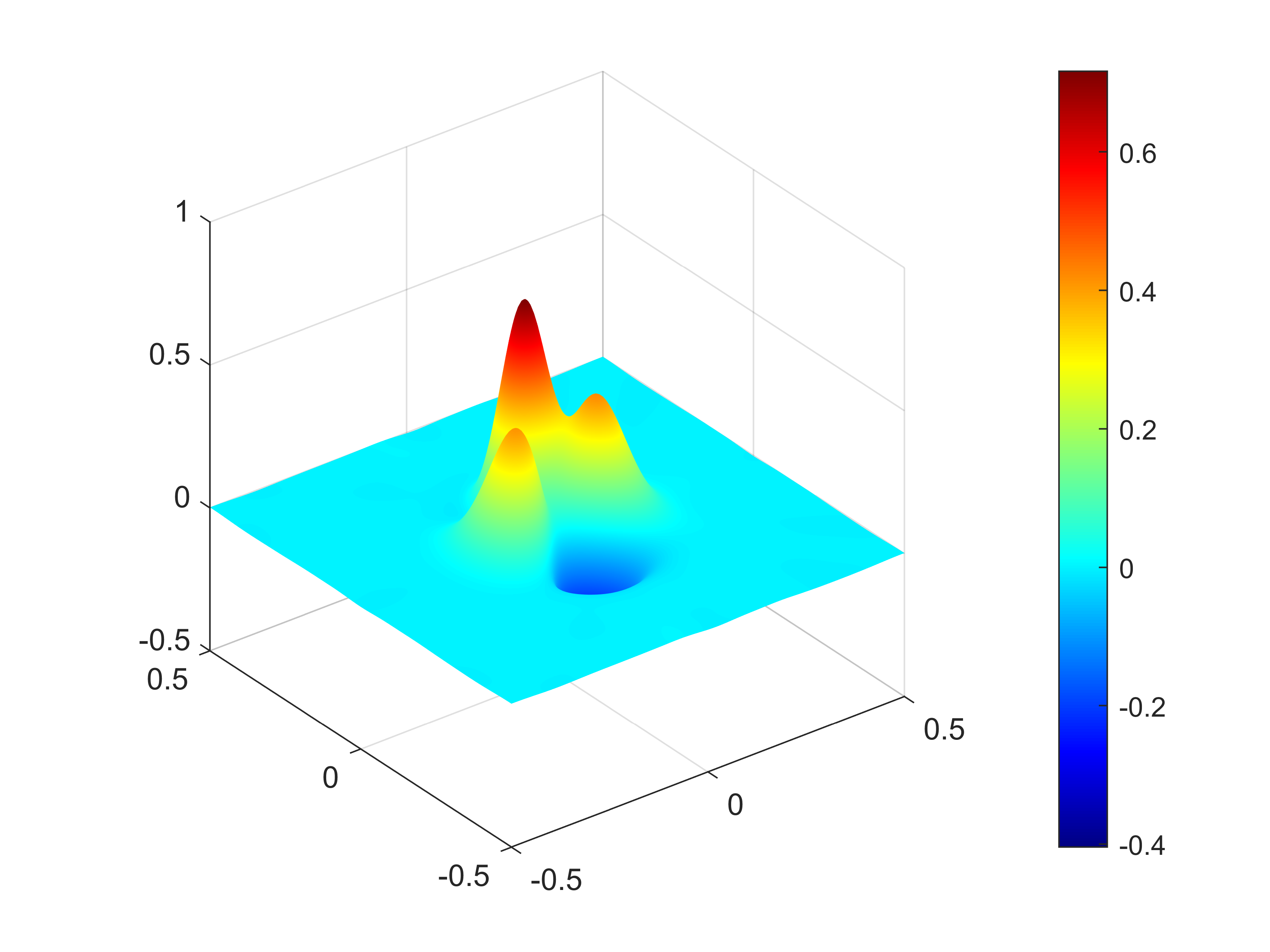}}
			\subfigure[]{\includegraphics[width=0.48\textwidth]{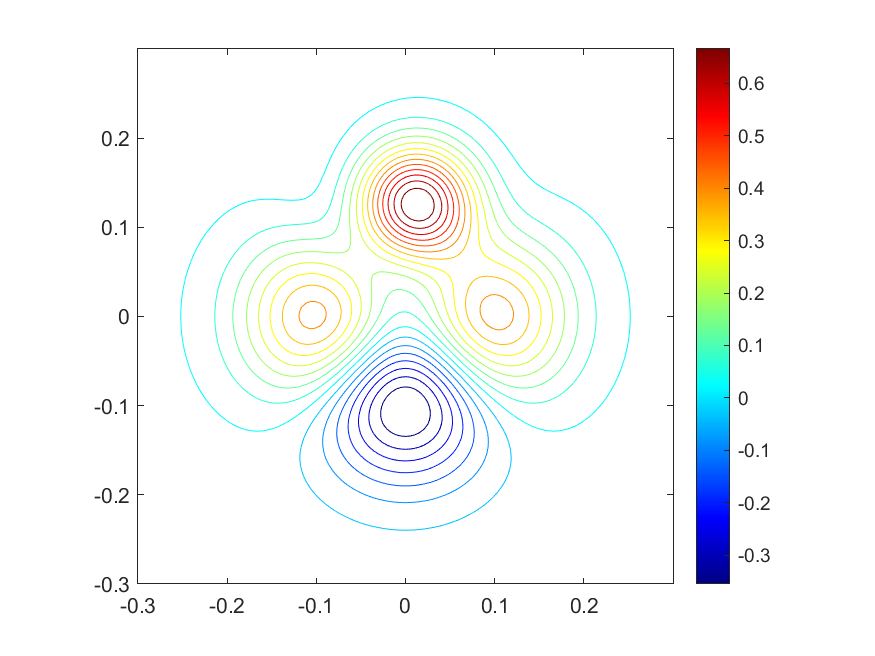}}
			\caption{The reconstruction of source function $S_1$ with $\delta=10\%$. (a) Surface plot; (b) contour plot.}\label{fig:S1_reconstruction}
		\end{figure}
		\begin{figure}[htp]
			\centering
			\subfigure[]{\includegraphics[width=0.48\textwidth]{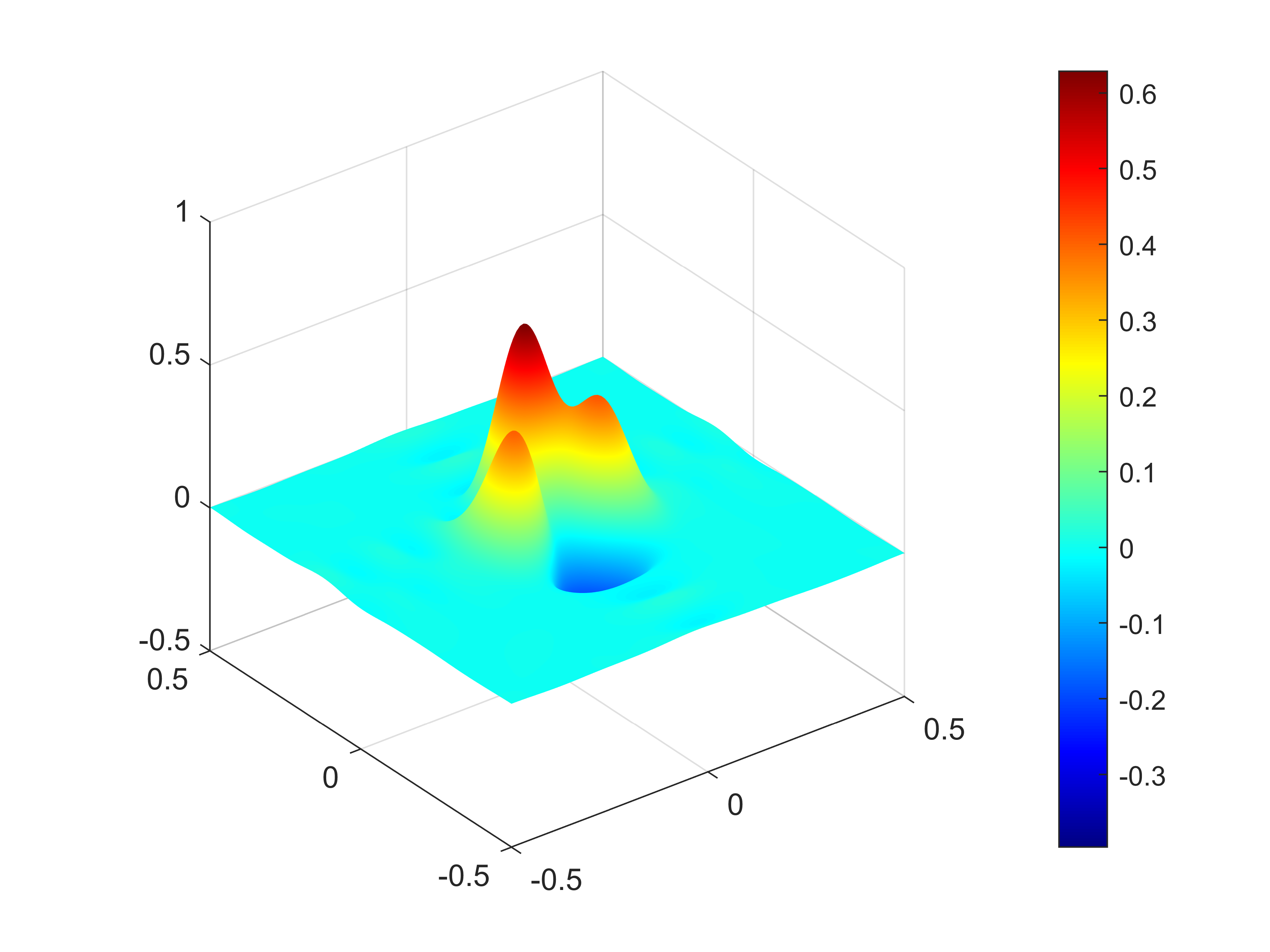}}
			\subfigure[]{\includegraphics[width=0.48\textwidth]{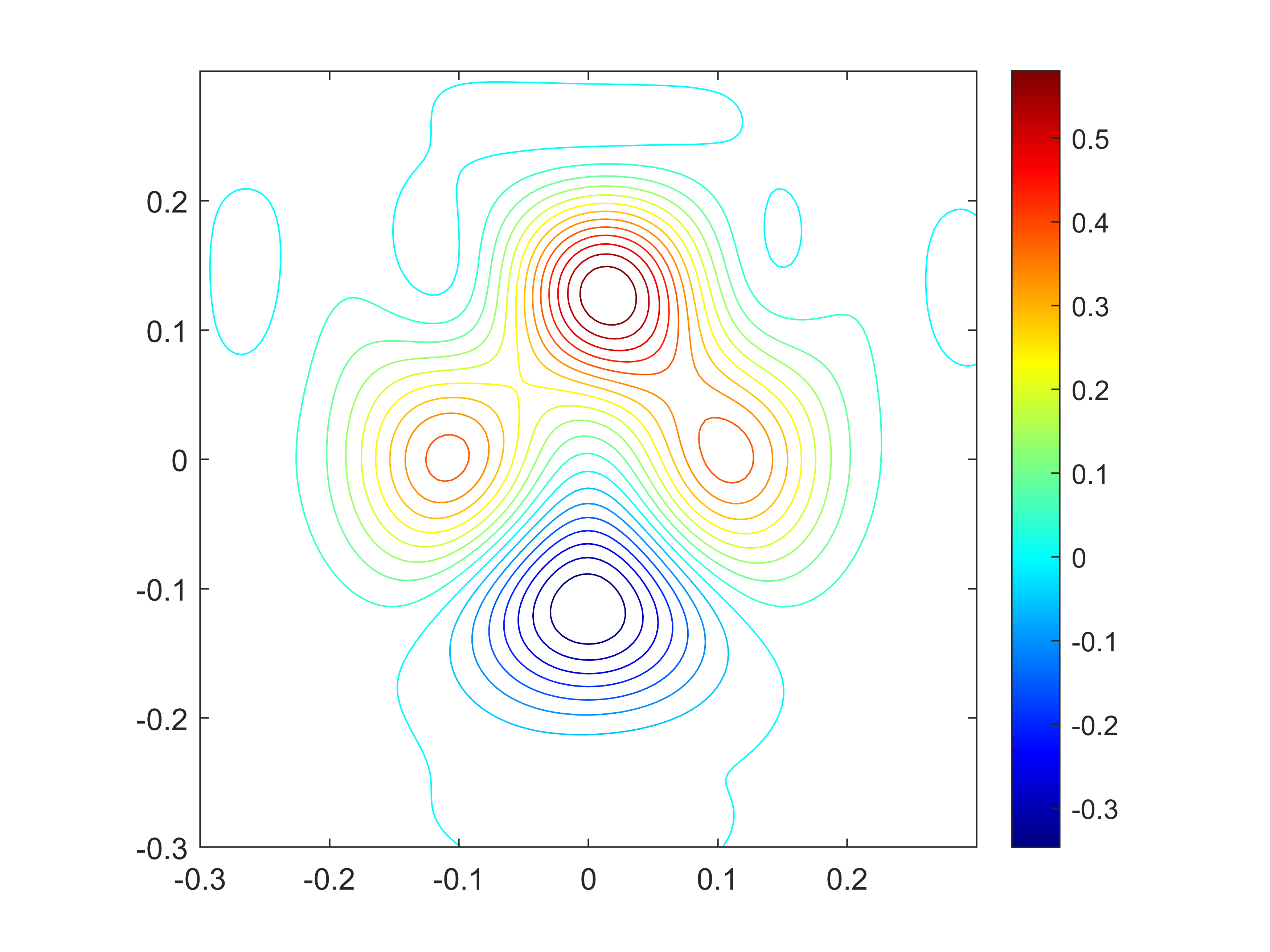}}
			\caption{The reconstruction of source function $S_1$ with $\delta=10\%$ and $N$ defined by \eqref{eq:N2}. (a) Surface plot; (b) contour plot.}\label{fig:S1_N}
		\end{figure}
		
		\begin{table}
			\centering
			\begin{tabular}{lccccc}
				\toprule
				$\delta$& $0.5\%$ &  $5\%$ & $10\%$&$20\%$ \\\hline
				$N(\delta)$ & 20  & 15 & 10 & 10\\
				$\|S_1^\delta-S_1\|_0/\|S_1\|_0$ &$2.2012\%$ &$2.2018\% $ & $ 2.2149\%$&$2.2165\%$ \\
				$\|S_1^\delta-S_1\|_1/\|S_1\|_1$ &$2.4917\%$ & $2.4918\% $ & $2.5690\%$&$2.5697\%$ \\
				Time (second) &$1.76$  &$1.72 $ & $1.07 $&$1.13$ \\\bottomrule
				\hline
			\end{tabular}
			\caption{The reconstruction error of $S_1$ and computing time subject to different $\delta.$}\label{tab:error}
		\end{table}
	\end{example}
	
	\begin{example}\label{exa2}
		In the second example, we consider the reconstruction of a discontinuous source function. The exact source function is given by
		\begin{equation*}
			S_2(x_1,x_2)  =
			\begin{cases}
				0.8 & \text{if }x_1^2+x_2^2<0.04,\\
				0.3 & \text{if }0.04\le x_1^2+x_2^2\le 0.09,\\
				0 & \text{elsewhere,}
			\end{cases}
		\end{equation*}
	which is plotted in \Cref{fig:S2_exact}. By taking $\theta_{\max}=2\pi,$  we exhibit the reconstruction in \Cref{fig:S2_2pi}. Comparing \Cref{fig:S2_exact} and \Cref{fig:S2_2pi}, we can see that the source function can be roughly reconstructed. To better understand the reconstruction, we depict several cross-sections of the source function and the reconstruction in \Cref{fig:S2_cross}. From \Cref{fig:S2_cross}, we can find that our method exhibits distinct reconstruction performance by taking different $x_2$. Meanwhile, the jump discontinuity of the source function can be captured by the proposed method, and the well-known Gibbs phenomenon around the jumps is clearly observed.
	\end{example}
	
	\begin{figure}[htp]
		\centering
		\subfigure[]{\includegraphics[width=0.48\textwidth]{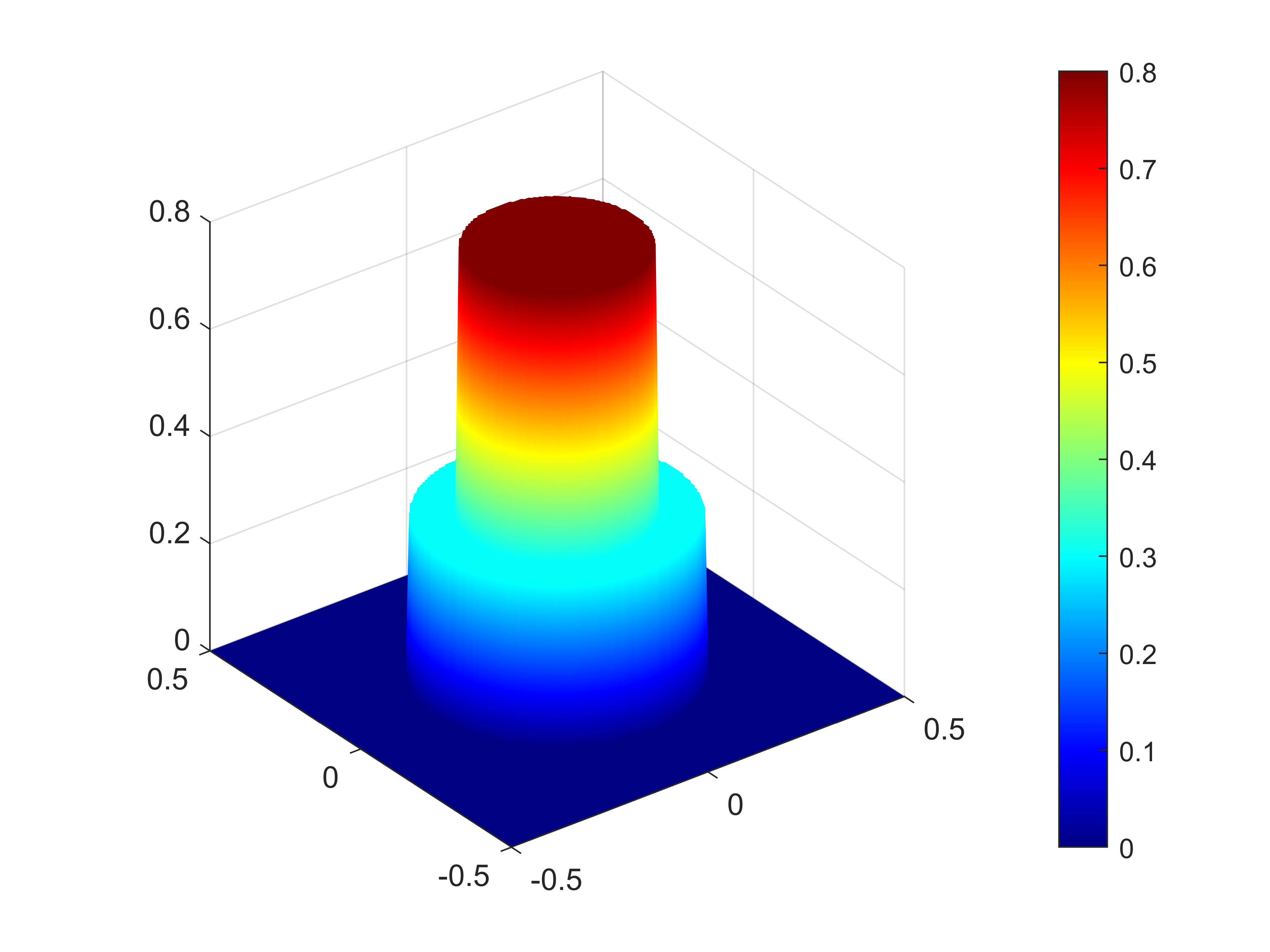}}
		\subfigure[]{\includegraphics[width=0.48\textwidth]{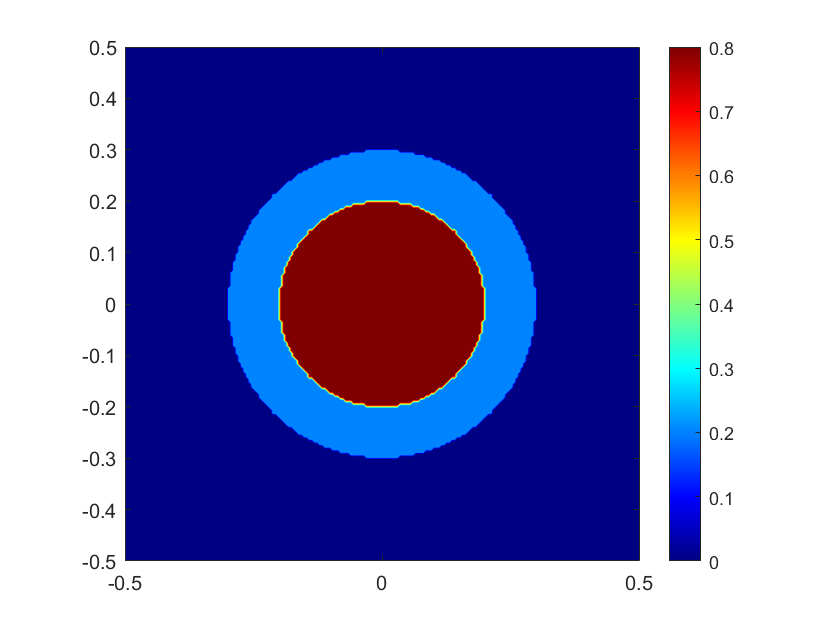}}
		\caption{The exact source function $S_2.$ (a) Surface plot; (b) contour plot.}\label{fig:S2_exact}
	\end{figure}

\begin{figure}[htp]
	\centering
	\subfigure[]{\includegraphics[width=0.48\textwidth]{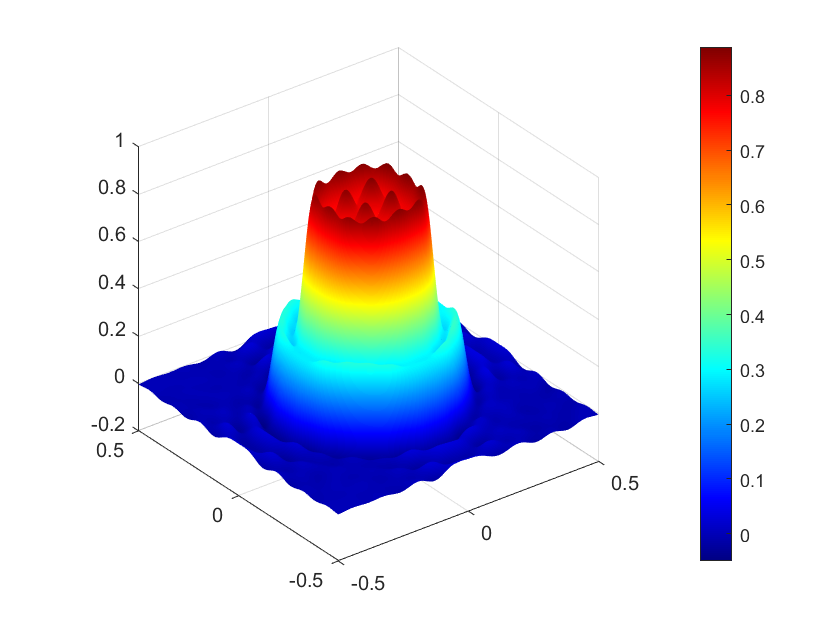}}
	\subfigure[]{\includegraphics[width=0.48\textwidth]{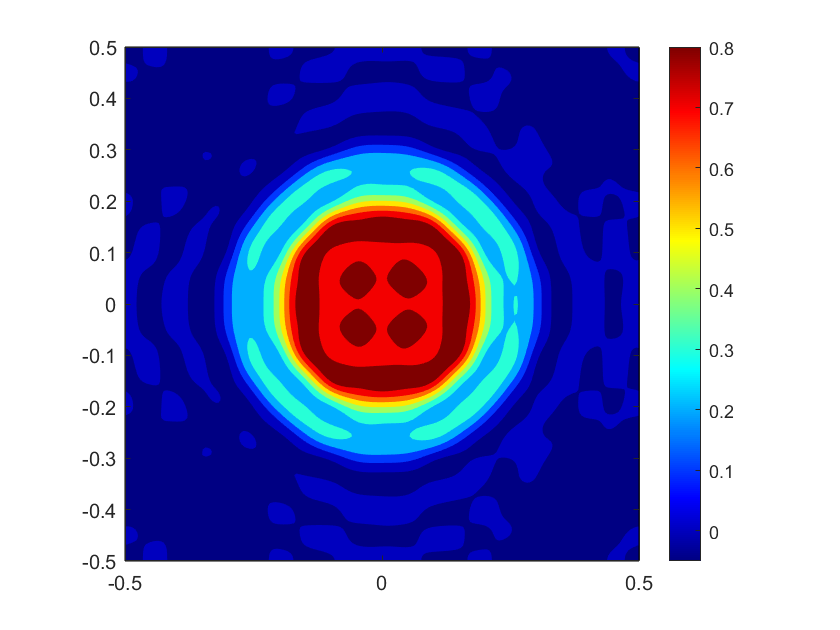}}
	\caption{Reconstruction of the source function $S_2$ with $\theta_{\max}=2\pi.$ (a) Surface plot; (b) contour plot.}\label{fig:S2_2pi}
\end{figure}

\begin{figure}[htp]
	\centering
	\subfigure[]{\includegraphics[width=0.48\textwidth]{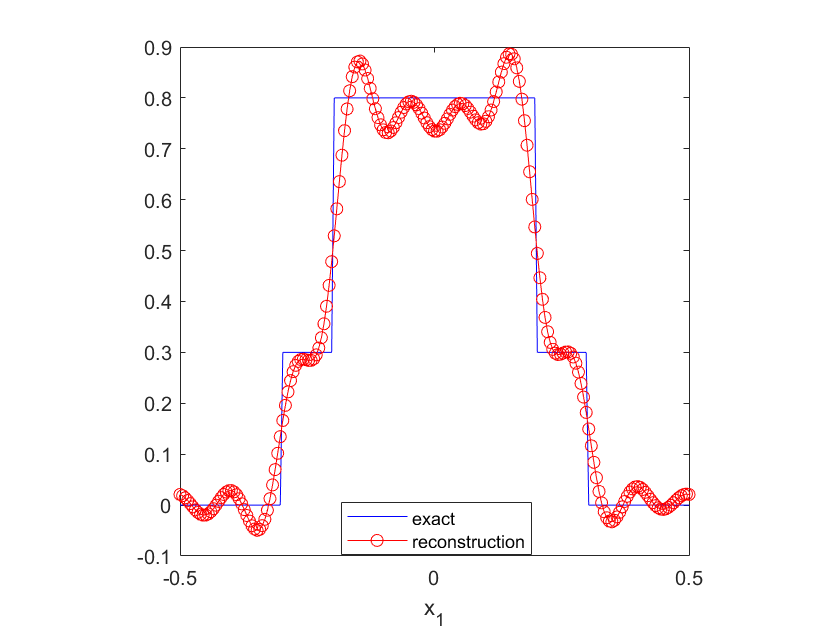}}
	\subfigure[]{\includegraphics[width=0.48\textwidth]{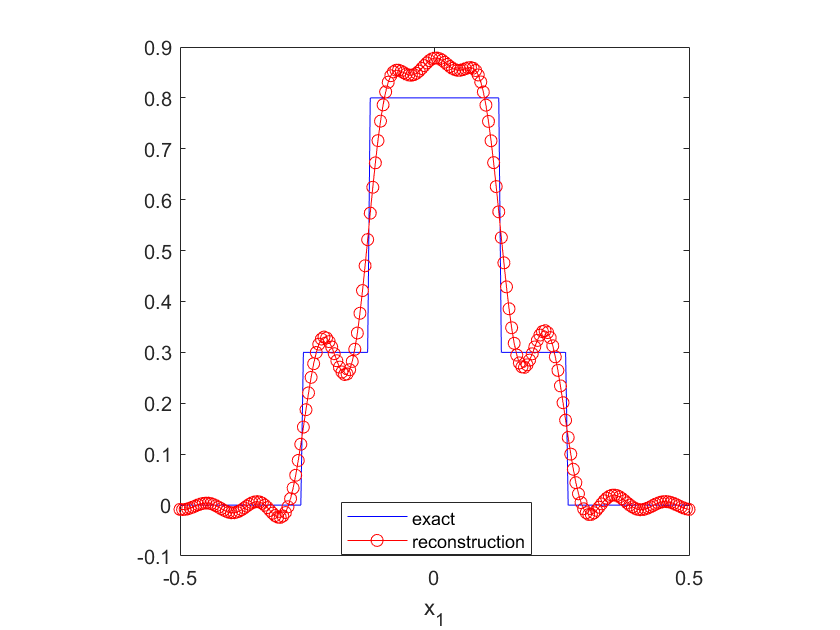}}
	\subfigure[]{\includegraphics[width=0.48\textwidth]{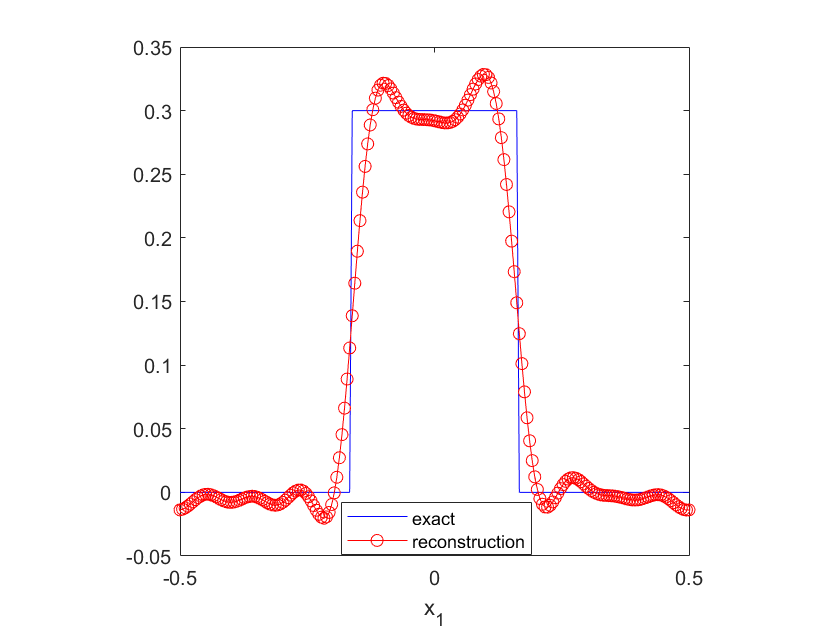}}
	\subfigure[]{\includegraphics[width=0.48\textwidth]{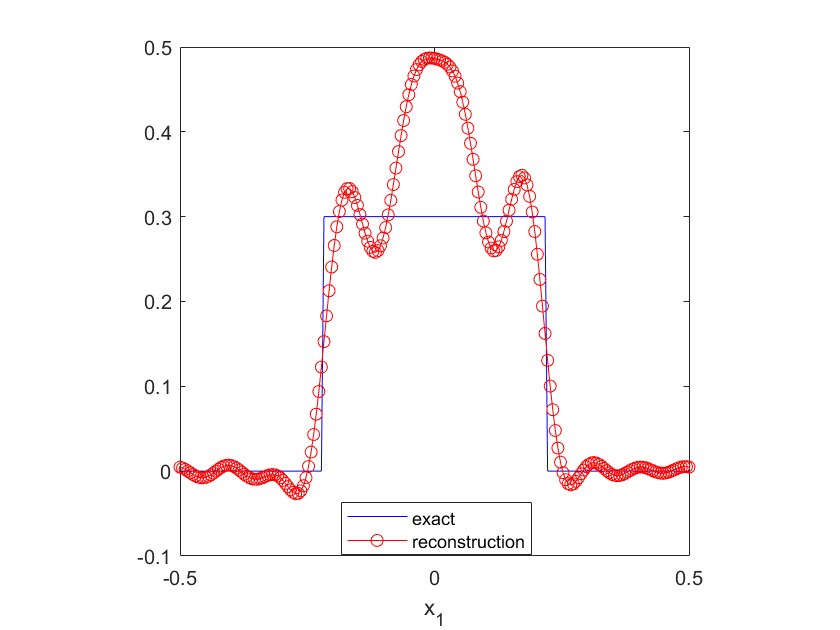}}
	\caption{Several cross-sections for the reconstruction of $S_2.$ (a) $x_2=0$; (b) $x_2=0.15$; (c) $x_2=-0.25$; (d) $x_2=-0.2$.}\label{fig:S2_cross}
\end{figure}

\begin{example}\label{exa3}
In the last example, we consider the reconstruction of the source function given by
\begin{align*}
			S_3(x_1,x_2)=0.3(1-x_1)^2\mathrm{e}^{-x_1^2-(x_2+1)^2}-(0.2x_1-x_1^3-x_2^5)\mathrm{e}^{-x_1^2-x_2^2}-0.03\mathrm{e}^{-(x_1+1)^2-x_2^2}.
\end{align*}

We display the exact source function in \Cref{fig:S3_exact}. In \Cref{fig: S3_reconstruction}, we present the reconstruction and the error under different observation apertures. Further, we depict several cross-section plots at $x_2=0$ and $x_2=-1.5,$ respectively in \Cref{fig: S3_reconstruction2}. From \Cref{fig: S3_reconstruction} and \Cref{fig: S3_reconstruction2}, we can see that using the full-aperture observation, the reconstructed source function matches well with the exact one and the reconstruction error is relatively small. When only limited-aperture data is available, the overall reconstruction is less satisfactory though some parts of the function can be reconstructed.
\end{example}
	
\begin{figure}[htp]
		\centering
		\subfigure[]{\includegraphics[width=0.48\textwidth]{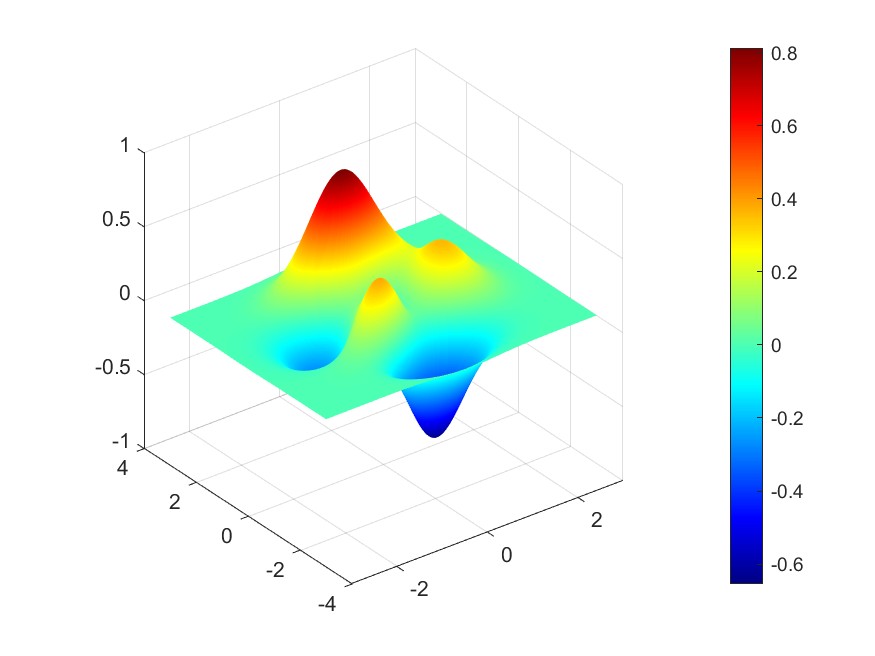}}
		\subfigure[]{\includegraphics[width=0.48\textwidth]{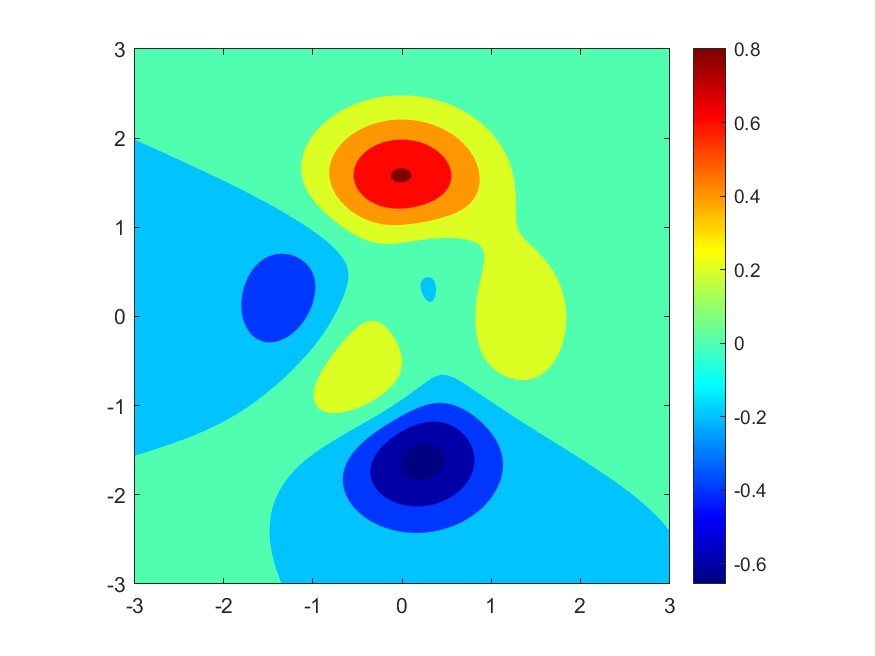}}
		\caption{The exact source function $S_3.$ (a) Surface plot; (b) contour plot.}\label{fig:S3_exact}
\end{figure}
	
\begin{figure}[htp]
		\subfigure[]{\includegraphics[width=0.45\textwidth]{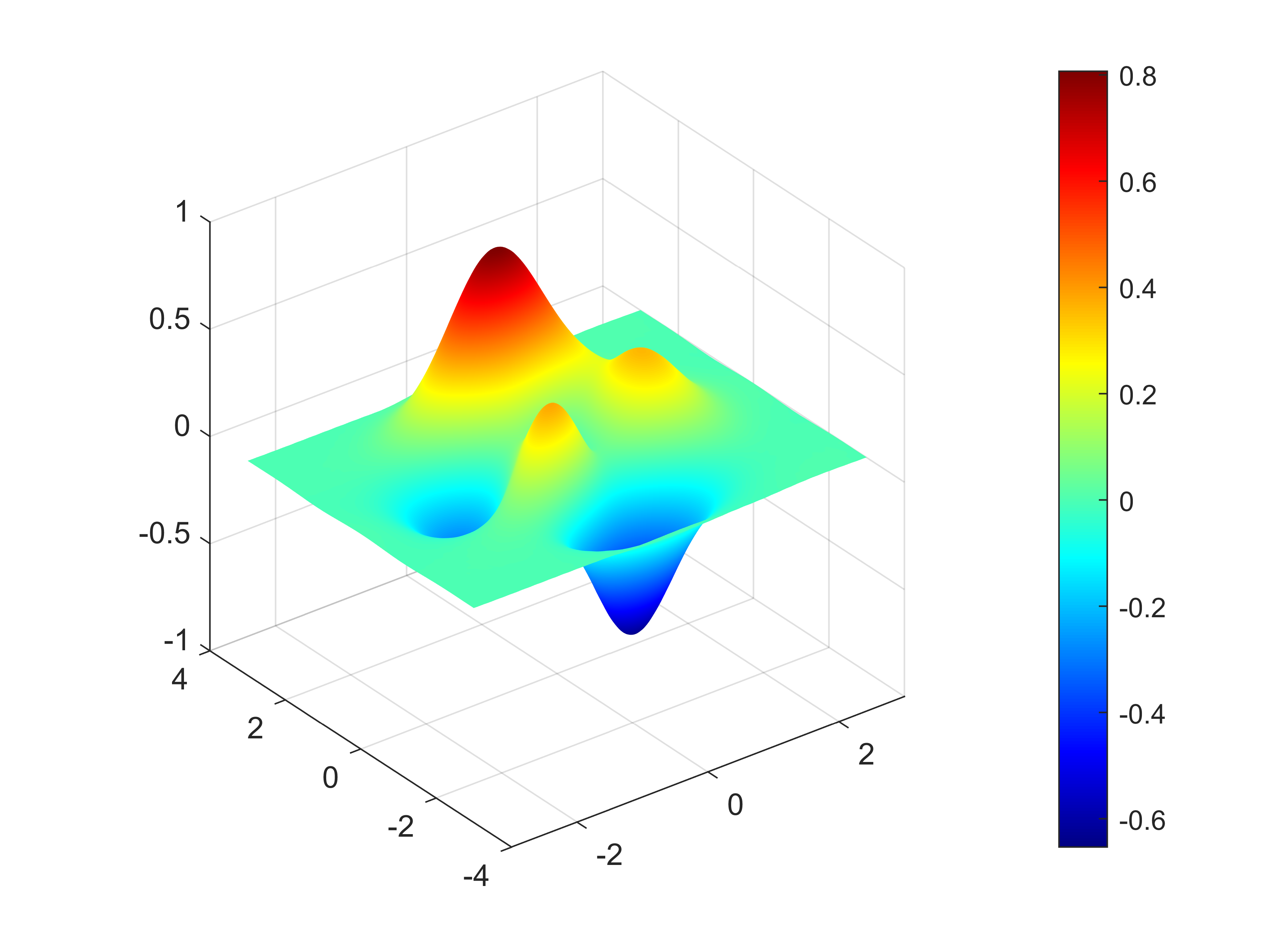}}
		\subfigure[]{\includegraphics[width=0.45\textwidth]{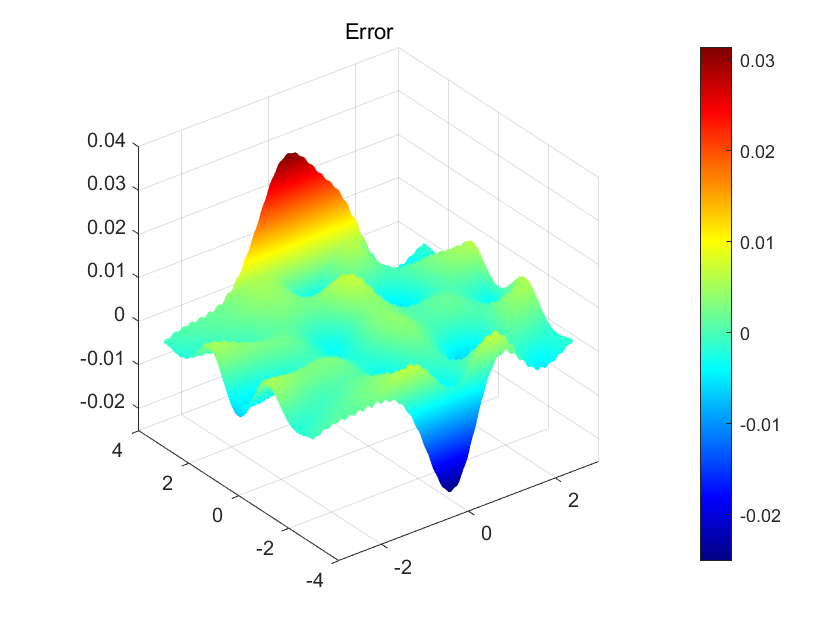}}
		\subfigure[]{\includegraphics[width=0.45\textwidth]{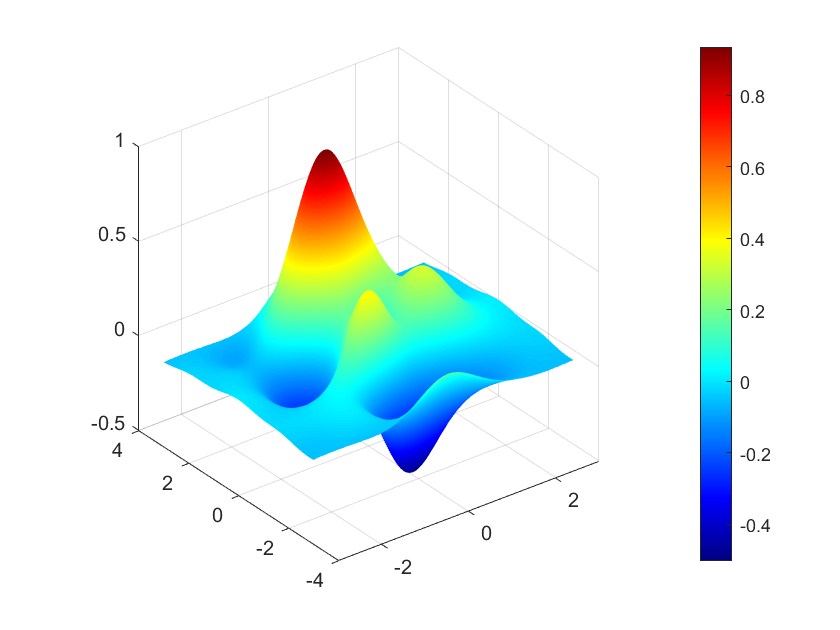}}
		\subfigure[]{\includegraphics[width=0.45\textwidth]{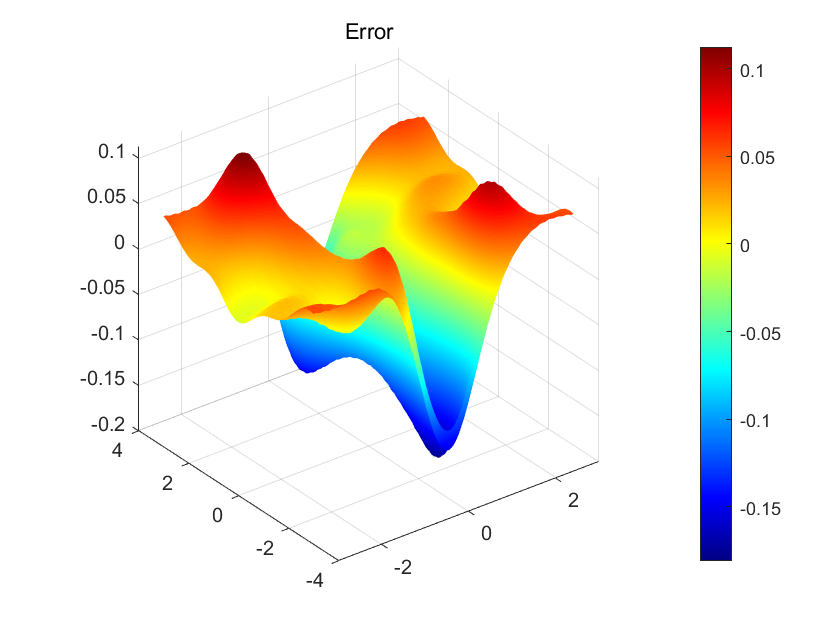}}
		\subfigure[]{\includegraphics[width=0.45\textwidth]{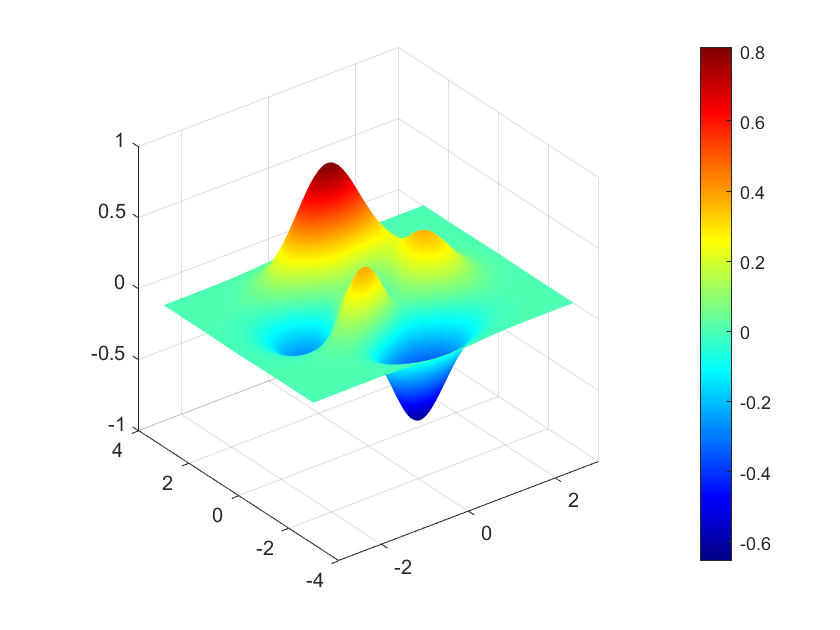}}\qquad\qquad\,
		\subfigure[]{\includegraphics[width=0.45\textwidth]{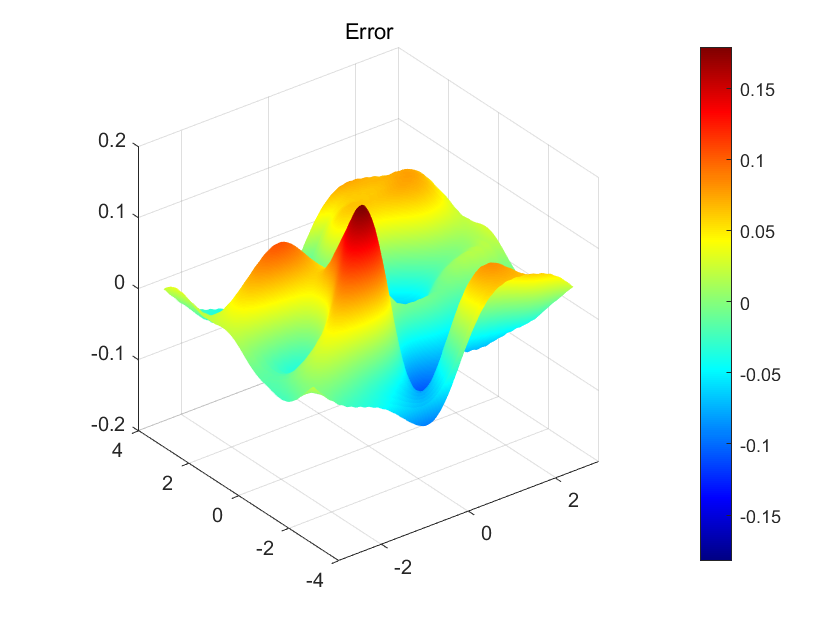}}
		\caption{The reconstruction of source function $S_3$ with different $\theta_{\max}.$ (a)--(b): reconstruction under full-aperture observation; (c)--(d): reconstruction with $\theta_{\max}=\frac{3\pi}{2};$ (e)--(f): reconstruction with $\theta_{\max}=\pi$. The left column: reconstructions; the right column: reconstruction errors.}\label{fig: S3_reconstruction}
\end{figure} 
	
\begin{figure}[htp]
		\subfigure[]{\includegraphics[width=0.45\textwidth]{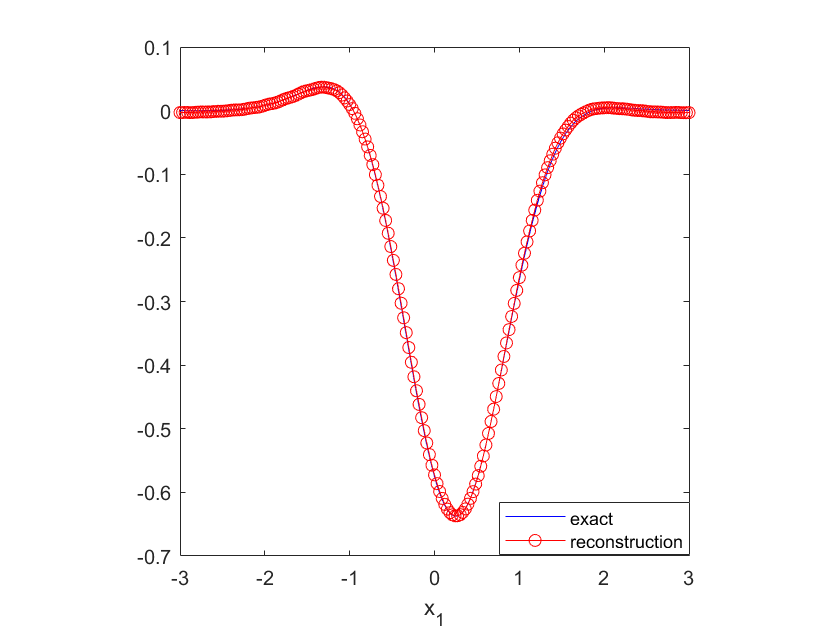}}
		\subfigure[]{\includegraphics[width=0.45\textwidth]{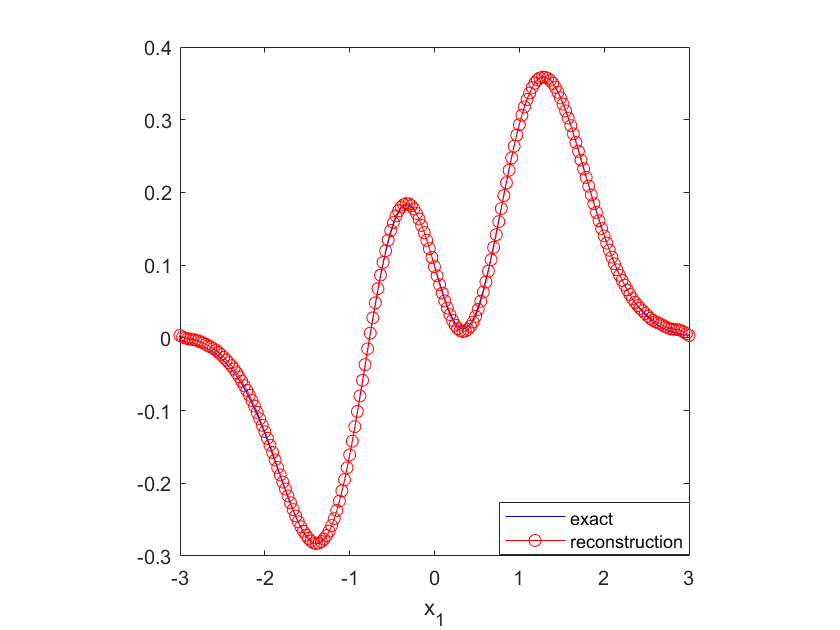}}\\
		\subfigure[]{\includegraphics[width=0.45\textwidth]{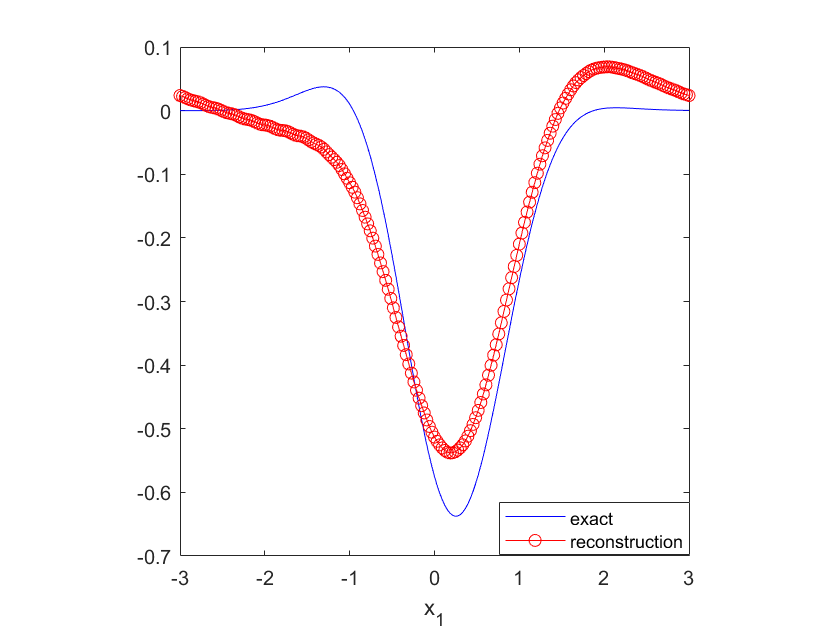}}
		\subfigure[]{\includegraphics[width=0.45\textwidth]{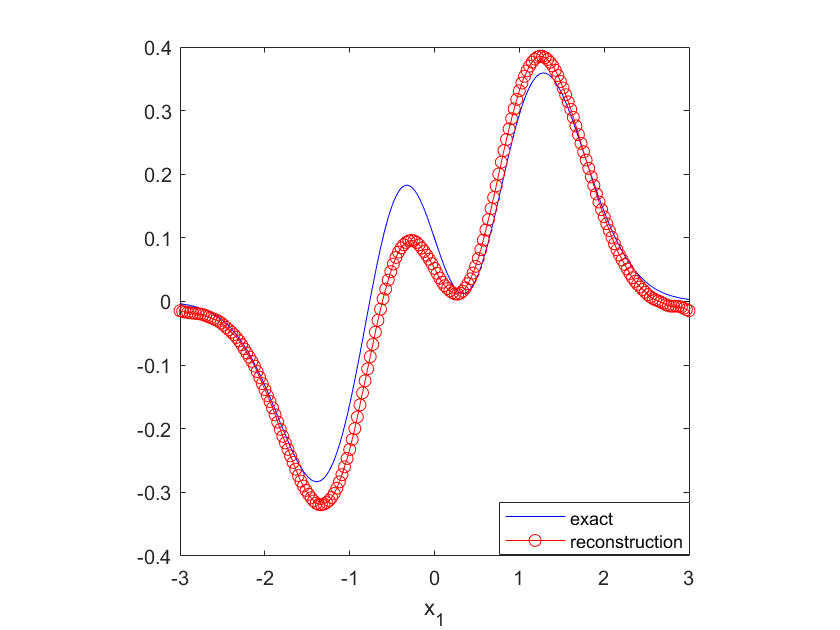}}\\
		\subfigure[]{\includegraphics[width=0.45\textwidth]{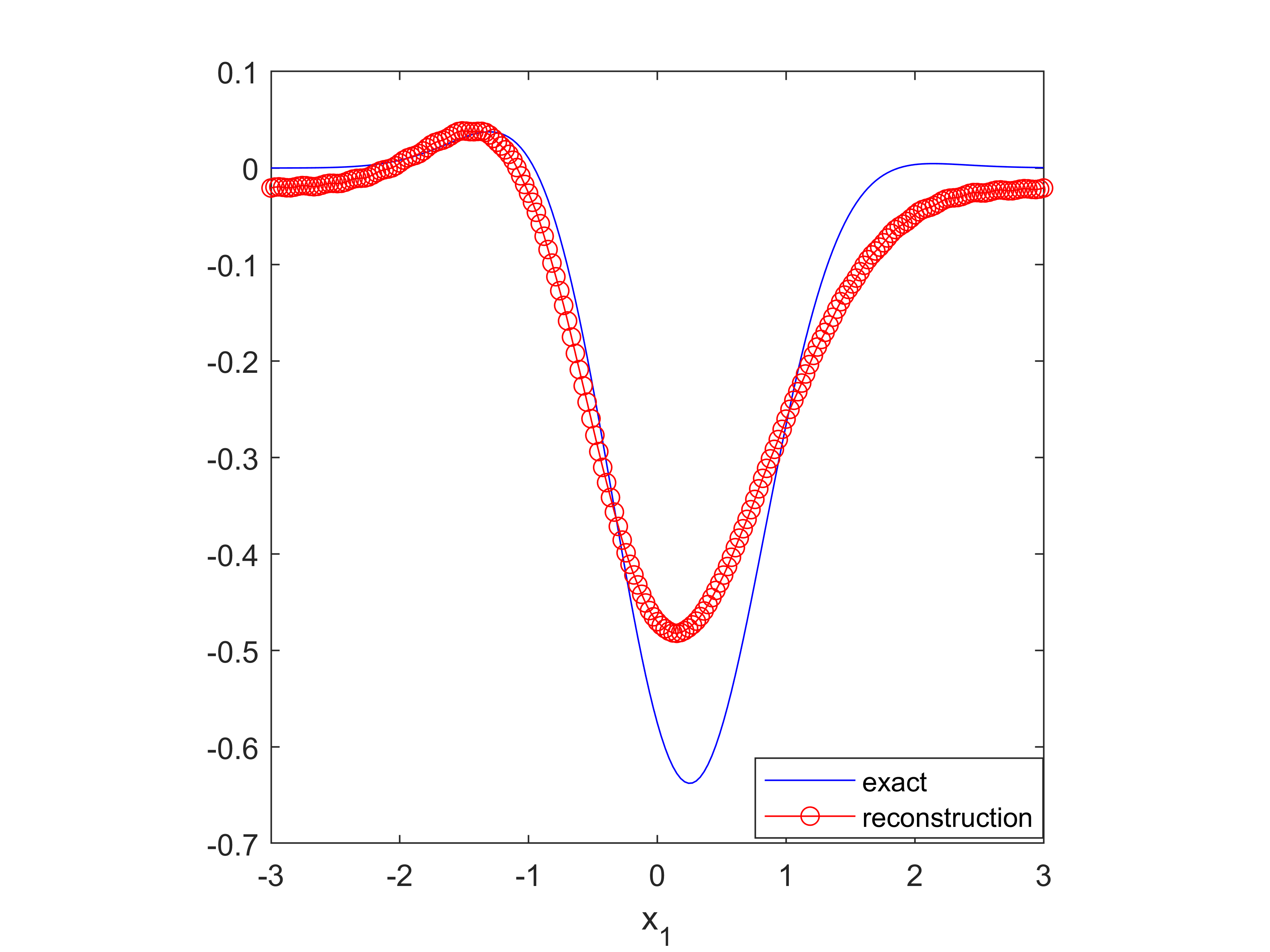}}
		\subfigure[]{\includegraphics[width=0.45\textwidth]{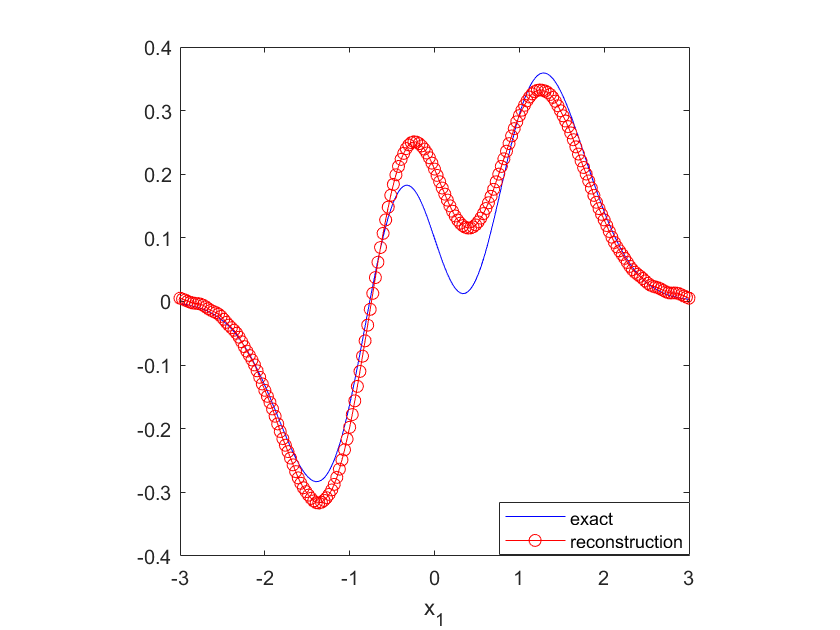}}
		\caption{The cross-section plots of source function $S_3$ and its reconstruction. (a)--(b): reconstruction under full-aperture observation; (c)--(d): reconstruction with $\theta_{\max}=\frac{3\pi}{2};$ (e)--(f): reconstruction with $\theta_{\max}=\pi$. The left column: cross-section plots at $x_2=0$; the right column: cross-section plots at $x_2=-1.5$.}\label{fig: S3_reconstruction2}
\end{figure}

\end{document}